\documentclass[11pt, a4paper]{article}

\usepackage{amsmath}
\usepackage{amssymb}
\usepackage{amsthm}
\usepackage{authblk}
\usepackage{dsfont}
\usepackage[T1]{fontenc}
\usepackage{geometry}
\usepackage{mathtools}
\usepackage{natbib}
\usepackage{url}
\usepackage[dvipsnames]{xcolor}

\usepackage[colorlinks=true,allcolors=blue]{hyperref}
\usepackage{cleveref}
\usepackage{xr-hyper}

\numberwithin{equation}{section}

\theoremstyle{plain}
\newtheorem{theorem}{Theorem}[section]
\newtheorem{lemma}{Lemma}[section]
\newtheorem{proposition}{Proposition}[section]
\newtheorem{remark}{Remark}[section]

\DeclareMathOperator{\C}{\mathbb{C}}
\DeclareMathOperator{\N}{\mathbb{N}}
\DeclareMathOperator{\R}{\mathbb{R}}
\DeclareMathOperator{\Z}{\mathbb{Z}}

\DeclareMathOperator{\e}{\mathrm{e}}
\DeclareMathOperator{\imag}{\mathrm{i}}

\DeclareMathOperator{\Cov}{\mathrm Cov}
\DeclareMathOperator{\Exp}{\mathbb{E}}
\DeclareMathOperator{\Prob}{\mathbb{P}}

\begin{document}

\title{Self-normalization for Spectral Density Integrals}

\author[]{Holger Dette}
\author[]{Sebastian K\"uhnert}

\affil[]{Department of Mathematics, Ruhr University Bochum, 44780 Bochum, Germany, \textsuperscript{}\href{mailto:holger.dette@rub.de}{holger.dette@rub.de}, \href{mailto:sebastian.kuehnert@rub.de}{sebastian.kuehnert@rub.de}}

\date{\today}

\maketitle

\begin{abstract}
Integrals of spectral densities are frequently used to summarize spectral characteristics of linear processes. This work studies self-normalization for estimators of such integrals based on sequential periodograms and establishes weak convergence of the corresponding processes. For linear functionals of the spectral density, self-normalization yields pivotal limiting distributions that are free of unknown spectral quantities. For non-linear functionals, however, additional components with distinct covariance structures may arise in the limiting process. We demonstrate this phenomenon for the integrated squared spectral density.
\end{abstract} 

\noindent{\small \textit{MSC 2020 subject classifications:} 62M15}

\noindent{\small \textit{Keywords:} Integrated periodogram; pivotal; self-normalization; sequential estimation; spectral density}

\section{Introduction}\label{Sec1}
In frequency-domain time series analysis, see, for instance, \cite{Bloomfield2000,Priestley1981} for comprehensive overviews, integrals of the form
\begin{align}\label{eq1}
    \Phi=\int_{-\pi}^{\pi} w(\lambda)\phi(f(\lambda))\,\mathrm d\lambda,
\end{align}
where $f$ denotes the spectral density and $w$ and $\phi$ are given functions, play a fundamental role, with applications across a wide range of fields, including physiology and neuroscience \citep{Akselrod1981,Buzsaki2006}, signal processing \citep{Hayes1996}, engineering \citep{Vanmarcke1972}, astrophysics \citep{VaughanEtal2003}, and economics \citep{LevyDezhbakhsh2003}. From a theoretical perspective, the estimation of such integrals has received considerable attention; see, for instance, \cite{GiraitisKoul2013,Ginovyan1988,HarizEtAl2026,Taniguchi1980}. Multivariate extensions have also been investigated in \cite{Akashietal2021,Dahlhaus1985,Dahlhaus1988b}, with the latter author providing particularly general results on multivariate spectral distribution functions under mild conditions and, in the more recent work, establishing process-level convergence indexed by classes of functions. The weak limits obtained in these works generally depend on unknown spectral quantities, such as integrals involving powers of the spectral density and the fourth-order cumulant spectral density \citep[cf.][]{Brillinger2001}. An exception is \cite{ShaoZhang2010}, who mention a sequential result for (vectors of) integrals of the form $\int_0^x f(\lambda)\,\mathrm d\lambda$ under implicit conditions.

This article investigates self-normalized sequential estimators of the spectral density integrals $\Phi$ in \eqref{eq1} for real-valued linear processes, with particular emphasis on whether self-normalization yields pivotal (parameter-free) limits, thereby avoiding the estimation of intricate spectral quantities. The underlying ideas of self-normalization can be found, for instance, in \cite{Lobato2001,Shao2015}, while more recent applications to a variety of time series inference problems include \cite{Bastian2026,ChengChan2024,DetteKuehnert2025,HongEtal2024,SunZhuLinton2025}. We establish results for both linear and quadratic $\phi$ under comparatively mild assumptions on the weight function $w$, primarily requiring H\"older continuity. In particular, we obtain pivotal weak limits in the linear setting, enabling inference without estimating unknown spectral quantities, while we observe that the situation is generally different in the non-linear case. While the form of these limits is arguably intuitive, their rigorous derivation is technically demanding. In particular, the sequential setting leads to substantially more intricate covariance structures and cumulant calculations, as it precludes several simplifying identities available in the non-sequential setting when summing over the entire grid of Fourier frequencies. The proofs are therefore technically demanding. 

The remainder of this paper is organized as follows. Our results are presented in Section~\ref{Sec2}, followed by a discussion in Section~\ref{Sec3}. The proofs are given in Section~\ref{Sec4}, with auxiliary results used therein collected in Appendix~\ref{Sec5}. 

\section{Results}\label{Sec2}

The focus is on sequential estimation of the spectral density integral $\Phi$ in \eqref{eq1} for certain symmetric functions $w\colon[-\pi,\pi]\to\R$, considering both $\phi(x)=x$ and $\phi(x)=x^2$. We assume that the observations arise from the linear process
\begin{align}\label{eq2}
    X_k=\sum^\infty_{j=-\infty}\psi_j\varepsilon_{k-j},
    \qquad
    0 < \sum^\infty_{j=-\infty}|j|^\alpha|\psi_j|<\infty,
    \quad \alpha \in (1/2,1],  
\end{align}
where $(\varepsilon_k)_{k\in\mathbb Z}$ is a Gaussian white noise with unit variance. As a consequence, $(X_k)$ is a stationary, centered Gaussian process and its spectral density has the form
\begin{align}
    f(\lambda)=\frac{1}{2\pi}\sum^\infty_{k=-\infty}\operatorname{Cov}(X_0,X_k)\mathrm e^{-\mathrm i\lambda k} \geq 0, \qquad \lambda\in[-\pi,\pi],
\end{align}
and is H\"older continuous of order $\alpha \in (1/2,1]$ by the summability condition in \eqref{eq2}. Our proposed estimators are based on the \emph{sequential periodogram}
\begin{align}\label{eq3}
    I_N^s(\lambda)\coloneqq\frac{1}{2\pi sN}\bigg|\sum_{k=1}^{\lfloor sN\rfloor}X_k \e^{-\mathrm{i}k\lambda}\bigg|^2, \qquad s \in \mathbb D \coloneqq [1/2,1] , ~ \lambda \in [-\pi, \pi],
\end{align}
where $X_1,\ldots,X_N$ are consecutive observations of the process in \eqref{eq2}. For $s=1,$ this refers to the common \emph{periodogram} $I_N(\lambda),$ which is an inconsistent but asymptotically unbiased estimator for $f(\lambda)$ \citep[see, e.g.,][]{BrockwellDavis1991}. 

\subsection{Linear case}\label{Sec21}
Let $\phi(x)=x$. Although $I_N(\lambda)$ is an inconsistent estimator for $f(\lambda)$, the \emph{weighted integrated periodogram} $\Phi_N=\frac{4\pi}{N}\sum_{j=1}^{\lfloor N/2\rfloor}w(\lambda_j)I_N(\lambda_j),$ where $\lambda_j=\frac{2\pi j}{N}$ are the \emph{Fourier frequencies}, consistently estimates $\Phi$. Heuristically, this follows from $\Exp(I_N(\lambda))\to f(\lambda)$ and Riemann-sum approximation. Unlike its integral counterpart $\Phi'_N = \int_{-\pi}^{\pi} w(\lambda)I_N(\lambda)\,\mathrm{d}\lambda$, which is asymptotically equivalent to $\Phi_N$ in the sense that $\Phi_N-\Phi'_N=o_{\Prob}(N^{-1/2})$ \citep[cf.][]{DeoChen2000}, $\Phi_N$ is directly implementable. Moreover, it naturally motivates the \emph{weighted integrated sequential periodogram}
\begin{align}\label{eq4}
    \Phi_N(s)\coloneqq\frac{4\pi}{N}\sum_{j=1}^{\lfloor N/2\rfloor}w(\lambda_j)I_N^s(\lambda_j),
    \qquad s\in\mathbb D.
\end{align}
Using self-normalization in the spirit of, e.g., \cite{Shao2015}, we obtain the following weak limit (weak convergence is denoted by $\stackrel{d}{\rightarrow}$).
\begin{theorem}\label{Thm1}
Let the function $w : [-\pi, \pi] \to \R$ be symmetric, H\"older continuous of order $\beta \in (1/2,1],$ and non-zero (Lebesgue-)almost everywhere. Then, as $N\to\infty,$
\[
    \frac{\Phi_N-\Phi}
    {\left(\displaystyle\int_{\mathbb D}
    \big(\Phi_N(s)-\Phi_N(1)\big)^2\,\mathrm{d}s\right)^{\!1/2}}
    ~\stackrel{d}{\longrightarrow}~
    \frac{\mathbb B(1)}
    {\left(\displaystyle\int_{\mathbb D}
    \frac{1}{s^2}\big(\mathbb B(s)-s\mathbb B(1)\big)^2
    \,\mathrm{d}s\right)^{\!1/2}}\,,
\]
where $\mathbb B = \{\mathbb B(s)\}_{s \in \mathbb D}$ is a Brownian motion on $\mathbb D,$ and the denominator on the right-hand side is almost surely non-zero \citep{QuanzDette2025}.
\end{theorem}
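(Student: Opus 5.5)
The plan is to prove a functional central limit theorem for the process $\{s(\Phi_N(s)-\Phi)\}_{s\in\mathbb D}$ and then conclude with the continuous mapping theorem. Concretely, we aim for
\[
    \Big\{\sqrt N\, s\big(\Phi_N(s)-\Phi\big)\Big\}_{s\in\mathbb D}\ \stackrel{d}{\longrightarrow}\ \{\tau\,\mathbb B(s)\}_{s\in\mathbb D}
\]
in $\ell^\infty(\mathbb D)$, where $\tau^2 = 8\pi\int_{-\pi}^{\pi} w^2(\lambda)f^2(\lambda)\,\mathrm d\lambda>0$. The variance is positive because $w\neq0$ almost everywhere, $f\ge0$ is continuous and not identically zero (since $\sum|\psi_j|>0$), and both $f$ and $w$ are symmetric. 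Given this, $\sqrt N(\Phi_N(s)-\Phi_N(1)) = s^{-1}\{\sqrt N s(\Phi_N(s)-\Phi)\} - \sqrt N(\Phi_N(1)-\Phi) + \sqrt N(s^{-1}-1)s\,(\Phi_N(s)-\Phi)\cdot 0$, so it converges jointly with the numerator to $\tau(s^{-1}\mathbb B(s)-\mathbb B(1)) = \tau s^{-1}(\mathbb B(s)-s\mathbb B(1))$. The map $g\mapsto \int_{\mathbb D}(g(s)/s-g(1))^2\,\mathrm ds$ is continuous on $\ell^\infty(\mathbb D)$, since $s\ge 1/2$. Hence $\tau$ cancels, and the result follows because the limiting denominator is a.s.\ non-zero by \cite{QuanzDette2025}.

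\textbf{Reduction to a quadratic form and bias.} First we write
\[
    s\,\Phi_N(s) = \frac{2}{N^2}\sum_{k,l=1}^{\lfloor sN\rfloor}X_kX_l\, a_N(k-l),
    \qquad a_N(h)=\sum_{j=1}^{\lfloor N/2\rfloor} w(\lambda_j)\cos(h\lambda_j).
\]
Next we control the bias $\sup_s\sqrt N\,|s\Exp\Phi_N(s)-s\Phi|\to0$. This uses $\Exp I_N^s(\lambda) = \frac{1}{s}\int F_{\lfloor sN\rfloor}(\lambda-\mu)f(\mu)\,\mathrm d\mu\cdot\frac{\lfloor sN\rfloor}{N}$ with the Fejér kernel $F$, together with the Hölder continuity of $f$ (order $\alpha>1/2$) and of $w$ (order $\beta>1/2$). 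The Riemann-sum error is then $O(N^{-\beta})$ and the Fejér smoothing error is $O(N^{-\alpha}\log N)$, both $o(N^{-1/2})$; this is exactly where the exponents exceeding $1/2$ are needed.

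\textbf{Finite-dimensional convergence.} For the centered part we compute covariances through cumulants. Gaussianity kills all cumulants of order at least three of $X$, so the fourth-order cumulant spectrum vanishes. We show
\[
    \Cov\big(\sqrt N s\Phi_N(s),\sqrt N t\Phi_N(t)\big)\to\tau^2\min(s,t).
\]
This reduces to sums $\frac{1}{N^3}\sum_{j,j'}w(\lambda_j)w(\lambda_{j'})|\sum_{k\le \lfloor sN\rfloor\wedge\lfloor tN\rfloor}\cdots|^2$. The key point is that the cross-covariance of the partial DFTs over $[1,sN]$ and $[1,tN]$ concentrates near $j=\pm j'$ with mass proportional to the overlap $\min(s,t)N$. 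This uses the kernel estimate $\frac1N\sum_{j'}|D_{m}(\lambda_j-\lambda_{j'})|^2\approx m/N$ for the Dirichlet kernel $D_m$; it is the sequential analogue of the orthogonality used for $s=1$, which no longer holds exactly off the full grid. Asymptotic normality then follows by showing that the higher-order cumulants of $\sqrt N\sum_i c_i s_i\Phi_N(s_i)$, which for a Gaussian quadratic form equal traces of products of $N^{-1/2}$-scaled Toeplitz-type matrices, vanish for order at least three. Equivalently, we show $\|A\|_{\mathrm{op}}=o(\|A\|_F)$.

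\textbf{Tightness, the main obstacle.} Tightness is where I expect most of the work. I would first try a moment bound: for $1/2\le s<t\le 1$, the increment $\sqrt N(t\Phi_N(t)-s\Phi_N(s))$ is a centered Gaussian quadratic form whose matrix is supported on indices with at least one index in $(\lfloor sN\rfloor,\lfloor tN\rfloor]$. Its fourth moment is bounded by $C(\|A\|_F^2)^2\le C\big((\lfloor tN\rfloor-\lfloor sN\rfloor)/N\big)^2$, by Gaussian hypercontractivity and a Frobenius-norm estimate analogous to the covariance computation. This yields Billingsley's criterion on the grid $\{k/N\}$, and the process is constant between grid points, so tightness in $D(\mathbb D)$ follows. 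The delicate step is the uniform Frobenius bound, which requires $\sup_N\frac1N\sum_h|a_N(h)|^2/N<\infty$-type control of the weights together with absolute summability of the autocovariances; I would obtain this from $\sum_{h}|\gamma(h)|<\infty$ and Parseval for $a_N$.
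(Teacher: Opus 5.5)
Your plan proves the theorem, but by a genuinely different route from the paper.

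\textbf{The paper's route.} It first replaces $I_N^s$ by $\widetilde I_N^s=2\pi f\,I_{N,\varepsilon}^s$ uniformly in $s$. This uses $J_m=\Xi J_{m,\varepsilon}+Y_m$, Marcinkiewicz--Zygmund bounds and a maximum over $\lfloor N/2\rfloor\leq m\leq N$. After that it works only with the innovation periodogram: covariances via Fej\'er means, fidi convergence via explicit bounds on cumulants of order $\geq 3$ by products of upper Dirichlet kernels, and tightness via a Fej\'er-mean computation of increment variances plus a chaining lemma.

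\textbf{Your route and what it buys.} You keep $(X_1,\dots,X_N)$ as a Gaussian vector with Toeplitz covariance $\Gamma$, $\|\Gamma\|_{\mathrm{op}}\leq 2\pi\|f\|_\infty$. You then use that $[a_N(k-l)]_{k,l=1}^N$ is circulant with eigenvalues bounded by $N\|w\|_\infty$. Two steps become much shorter. First, $\kappa_r=2^{r-1}(r-1)!\,\mathrm{tr}((A\Gamma)^r)$ is bounded by a constant multiple of $\|\Gamma^{1/2}A\Gamma^{1/2}\|_{\mathrm{op}}^{r-2}$ times the variance, and $\|A\|_{\mathrm{op}}=O(N^{-1/2})$, so all higher cumulants are disposed of at once. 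Second, $N$-periodicity of $a_N$ and Parseval give $\sum_{h=0}^{N-1}a_N(h)^2=O(N^2)$. Hence increment variances are $\lesssim(\lfloor tN\rfloor-\lfloor sN\rfloor)/N$ with no Fej\'er-mean bookkeeping, and hypercontractivity plus Billingsley's criterion then give tightness.

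\textbf{What your sketch leaves implicit.} You do not escape the paper's Step~1; it reappears in the covariance limit. There you need $\Cov(J_N^s(\lambda_j),J_N^t(\lambda_{j'}))=f(\lambda_{j'})D^+_{N_{st}}(\lambda_j-\lambda_{j'})/(\sqrt{st}\,N)+E_{jj'}$, and similarly for the conjugate term. Here $|E_{jj'}|\lesssim N^{-1}\sum_h\min(|h|,N)|\gamma(h)|=O(N^{-\alpha})$, using $\sum_h|h|^\alpha|\gamma(h)|<\infty$. The resulting error contributions to $N\,\Cov(s\Phi_N(s),t\Phi_N(t))$ are $O(N^{1-2\alpha}+N^{-\alpha}\log N)$. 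So $\alpha>1/2$ is needed here as well as in the bias. Mere absolute summability of $\gamma$, which is what you cite, would not suffice for this bound. Conversely, the paper's reduction to the i.i.d.\ innovation periodogram is the more natural starting point for the non-Gaussian extension mentioned in its remark.

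\textbf{Minor slips} (none affects the conclusion):
\begin{itemize}
\item With the paper's normalization the variance constant is $4\pi\int_{-\pi}^{\pi}w^2f^2$, not $8\pi\int_{-\pi}^{\pi}w^2f^2$. For unit-variance white noise and $w\equiv1$, $N\,\mathrm{Var}(\Phi_N)\to 2$. This is harmless since the constant cancels.
\item The increment $\sqrt N(t\Phi_N(t)-s\Phi_N(s))$ is not centered; its mean is about $\sqrt N(t-s)\Phi$. Tightness must be run for the mean-subtracted increments, with your uniform bias bound absorbing the rest.
\item $\sqrt N s(\Phi_N(s)-\Phi)$ is piecewise constant only after replacing $s\Phi$ by $(\lfloor sN\rfloor/N)\Phi$, at uniform cost $O(N^{-1/2})$.
\item The Riemann-sum error is $O(N^{-\min(\alpha,\beta)})$, not $O(N^{-\beta})$, since $wf$ is only H\"older of order $\min(\alpha,\beta)$.
\end{itemize}
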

Importantly, the limiting distribution is pivotal, as it does not depend on any unknown spectral quantities. Its quantiles, while not available in closed form, can be simulated with high accuracy.

\begin{remark}
The assumptions on $w$ in Theorem \ref{Thm1} are relatively mild. Symmetry is imposed mainly to simplify the derivations, while H\"older continuity of order $\beta\in (1/2,1]$ ensures that the Riemann-sum approximation errors decay sufficiently fast. Under these conditions, we obtain the weak convergence
\begin{align}\label{eq5}
    \boldsymbol{\mathcal F}_N
    =
    \big\{\mathcal F_N(s) \big\}_{s\in\mathbb D}
    \coloneqq 
    \Big\{\sqrt{N}(\Phi_N(s)-\Phi)\Big\}_{s\in\mathbb D}
    \,\stackrel{d}{\longrightarrow}\,
    \boldsymbol{\mathcal F}
    = 
    \big\{\mathcal F(s) \big\}_{s\in\mathbb D},
\end{align}
where convergence takes place in the space $\ell^\infty(\mathbb D)$ of bounded real-valued functions on $\mathbb D$ \citep{VaartWellner2023}. Here, $\boldsymbol{\mathcal F}$ denotes the centered Gaussian process defined by
\begin{align}
    \mathcal F(s)
    \coloneqq
    \frac{\tau_{w_f}}{s}\,\mathbb B(s),
    \qquad s\in\mathbb D,
\end{align}
with
\begin{align}\label{eq6}
    \tau_{w_f}
    \coloneqq
    \bigg(
        4\pi\int_{-\pi}^{\pi}
        w^2(\lambda)f^2(\lambda)\,\mathrm{d}\lambda
    \bigg)^{\!1/2},
\end{align}
and $\mathbb B=\{\mathbb B(s)\}_{s\in\mathbb D}$ denoting a Brownian motion. Since $w$ is assumed to be non-zero (Lebesgue-)almost everywhere and $f\not\equiv 0$ under our assumptions, we have $\tau_{w_f}>0$. Consequently, the self-normalized limit is well-defined, with the nuisance parameter $\tau_{w_f}$ canceling out. We note that, in principle, it would suffice for $w$ to be non-zero almost-everywhere on a non-degenerate interval belonging to the support of $f.$ Since $f$ is continuous and $f\not\equiv 0$, such a non-degenerate interval always exists. Nevertheless, assuming that \(w\) is non-zero almost everywhere on \([-\pi,\pi]\) is natural from a statistical perspective, as the true spectral density \(f\), and hence the location of such an interval, is typically unknown.
\end{remark}

\subsection{Quadratic case}\label{Sec22}
This section considers the integral $\Phi$ in \eqref{eq1} for $\phi(x)=x^2$ and $w\equiv1,$ which we denote as $\Psi.$ We define its sequential estimator by
\begin{align}\label{eq7}
    \Psi_N(s)\coloneqq\frac{4\pi}{N}\sum_{j=1}^{\lfloor N/2\rfloor}I_N^s(\lambda_j)I_N^s(\lambda_{j-1}), \qquad s\in\mathbb D,
\end{align}
with the corresponding sequential population counterpart given by
\begin{align}\label{eq8}
    \Psi(s)\coloneqq\bigg(1+\frac{\sin^2(\pi s)}{\pi^2s^2}\bigg)\Psi, \qquad s\in\mathbb D.
\end{align}

\begin{proposition}\label{Prop1} As $N \to \infty,$ we have
\begin{align}
    \boldsymbol{\mathcal{G}}_N = \big\{
    \mathcal{G}_N(s)
    \big\}_{s\in\mathbb{D}} \coloneqq \Big\{\sqrt{N}\,\big(\Psi_N(s) - \Psi(s)\big)\Big\}_{s\in\mathbb D}
    \stackrel{d}{\longrightarrow}\, \boldsymbol{\mathcal{G}} = \big\{
    \mathcal{G}(s)
    \big\}_{s\in\mathbb{D}}\,,
\end{align}
in $\ell^\infty(\mathbb D),$ where $\boldsymbol{\mathcal G}$ is the centered Gaussian process defined by
\begin{align}\label{eq9}
    \mathcal{G}(s)
    =
    \tau_{f^2}\bigg(\frac{1}{s}\mathbb{B}_1(s)
    +
    \frac{1}{2s^2}
    \mathbb{B}_2(s^2k(s))\bigg), \qquad s\in\mathbb{D}\,.
\end{align}
Here, $\tau_{f^2}$ denotes the constant defined by
\begin{align}
    \tau_{f^2}
    \coloneqq
    \bigg(16\pi\int^\pi_{-\pi}f^4(\lambda)\,\mathrm{d}\lambda\bigg)^{\!1/2}\!,
\end{align}
and $\mathbb{B}_1$ and $\mathbb{B}_2$ are independent standard Brownian motions on $[0,1],$ and
\begin{align}\label{eq10}
    k(s)
    \coloneqq 
    \frac{2s^3 + (2s-1)^3}{3s^2}
    +
    \frac{
    20\pi s
    -4\pi(2s-1)\bigl(\cos(4\pi s)+4\cos(2\pi s)\bigr)
    +8\sin(2\pi s)
    +\sin(4\pi s)
    }{
    16\pi^3s^2
    }.
\end{align}
\end{proposition}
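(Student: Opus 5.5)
The plan is the standard two-step route for weak convergence in $\ell^\infty(\mathbb D)$: convergence of the finite-dimensional distributions of $\boldsymbol{\mathcal G}_N$, followed by asymptotic tightness. Write $d_N^s(\lambda)=\sum_{k=1}^{\lfloor sN\rfloor}X_k\e^{-\imag k\lambda}$. The Gaussian linear structure gives the approximation
\[
d_N^s(\lambda)=\Big(\sum_j\psi_j\e^{-\imag j\lambda}\Big)\, d_{N,\varepsilon}^s(\lambda)+R_N^s(\lambda),
\]
where $d_{N,\varepsilon}^s$ is the discrete Fourier transform of the innovations. The remainder $R_N^s$ is controlled uniformly in $s$ and $\lambda$ by the summability condition in \eqref{eq2}, using $\alpha>1/2$. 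This reduces the problem to
\[
I_N^s(\lambda)\approx 2\pi f(\lambda)\,I_{N,\varepsilon}^s(\lambda),
\]
up to an error of order $o_{\Prob}(N^{-1/2})$ after summation. Hölder continuity of $f$ then lets us replace $f(\lambda_{j-1})$ by $f(\lambda_j)$ at cost $O(N^{-\alpha})=o(N^{-1/2})$.

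\textbf{Bias step.} Next I would compute $\Exp\Psi_N(s)$. Because $\lfloor sN\rfloor$ does not divide $N$, the sequential DFTs at adjacent Fourier frequencies are correlated. For Gaussian $\varepsilon$,
\[
\Exp\big[I^s_{N,\varepsilon}(\lambda_j)I^s_{N,\varepsilon}(\lambda_{j-1})\big]
= \frac{1}{(2\pi)^2}\bigg(1+\Big|\frac{1}{sN}\sum_{k\le sN}\e^{2\pi\imag k/N}\Big|^2\bigg).
\]
The bracket tends to $1+\sin^2(\pi s)/(\pi s)^2$, and the corrections are of order $O(1/N)$. Combined with the Riemann sum, this yields $\sqrt N(\Exp\Psi_N(s)-\Psi(s))\to0$ uniformly in $s$, which explains the centering \eqref{eq8}.

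\textbf{Covariance step (main computational obstacle).} The core is the limit
\[
N\,\Cov(\Psi_N(s),\Psi_N(t)).
\]
I would expand the fourfold products of Gaussian DFTs via Isserlis' theorem, which gives the same pairings as the cumulant expansion, since Gaussianity kills higher cumulants. Each pairing reduces to products of Dirichlet-type kernels
\[
D_{s,t}(\mu,\nu)=\frac1N\sum_{k\le \min(s,t)N}\e^{\imag k(\mu-\nu)}
\]
summed over pairs of Fourier frequency indices $(j,l)$.

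Two types of contributions arise:
\begin{itemize}
\item \emph{Diagonal pairings}, with $j=l$ or $j=l\pm1$ up to sign symmetry. These give $\min(s,t)/(st)$ times $\tau_{f^2}^2$, which is the $\mathbb B_1$ part. Its covariance $\min(s,t)/(st)$ matches that of $s^{-1}\mathbb B_1(s)$.
\item \emph{Off-diagonal pairings}, where $|j-l|$ is arbitrary but the kernels do not vanish because $sN\neq N$. Summing $|D|^2$-type products over $j-l$ converts, via Parseval, into integrals over time indices. These produce the polynomial terms $(2s^3+(2s-1)^3)/3$ together with the trigonometric terms in $k(s)$.
\end{itemize}

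For the second part to be representable as $\tfrac{1}{2s^2}\mathbb B_2(s^2k(s))$, the computed covariance must have the form $\min(s^2k(s),t^2k(t))/(4s^2t^2)$. This requires $s^2k(s)$ to be increasing and the cross term to be governed by the minimum. I would check this directly, and independence from $\mathbb B_1$ amounts to additivity of the two contributions. This bookkeeping is where I expect the main difficulty: handling edge effects near $j=0$ and $j=N/2$ and the $\lambda_{j-1}$ shift, and evaluating the resulting trigonometric sums exactly.

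\textbf{Finite-dimensional convergence and tightness.} With the covariance limit in hand, I would obtain finite-dimensional convergence by showing that all joint cumulants of order $\ge3$ of $\sqrt N(\Psi_N(s_i)-\Exp\Psi_N(s_i))$ vanish. I would use the diagram/product theorem for cumulants (Brillinger), bounding sums of products of Dirichlet kernels by $L_N(\lambda)=\min(N,1/|\lambda|)$, which gives order $N^{1-r/2}\log^{c}N$ for order $r$.

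For tightness, I would bound moments of increments, e.g.
\[
\Exp\,|\mathcal G_N(s)-\mathcal G_N(t)|^4\lesssim |s-t|^{1+\delta}+N^{-\gamma},
\]
using the same cumulant bounds, since increments involve only $O(|s-t|N)$ observations. I would combine this with a chaining or grid argument on a mesh of width $N^{-1}$, controlling within-cell oscillation by monotonicity-type bounds on partial sums, as in the sequential empirical process literature. Together with finite-dimensional convergence, this gives the claimed weak convergence.
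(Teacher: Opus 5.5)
\textbf{Genuine gap: the covariance step.} Your architecture is the paper's: reduction to $2\pi f I^s_{N,\varepsilon}$, bias via Isserlis, covariance via Isserlis pairings and Fej\'er/Dirichlet kernels, vanishing higher cumulants, and increment bounds. But the covariance step, which carries all the content, does not work as you describe it.

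First, your attribution of the two components is wrong. Since $k(1)=1$, the $\mathbb B_2$ part is present in the non-sequential case. At $s=t=1$ the ordinates are asymptotically $f(\lambda_j)E_j$ with $E_j$ i.i.d.\ standard exponential. Then $\operatorname{Var}(E_jE_{j-1})+2\Cov(E_jE_{j-1},E_{j+1}E_j)=3+2=5$, against $4$ for the $\tau_{f^2}^2$ part, i.e.\ $\tfrac54\tau_{f^2}^2$ in total. Your ``diagonal gives $\mathbb B_1$, leakage gives $\mathbb B_2$'' bookkeeping yields only $\tau_{f^2}^2$ there.

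In the paper the split is by pairing topology, not by frequency location:
\begin{itemize}
\item The $8$ pairings with a within-periodogram pair on each side give one Fej\'er kernel with prefactor $N_sN_tN_{st}$, hence $\tau_{f^2}^2/\max(s,t)$.
\item The fully cross-linked pairings give $N_{st}^2$ times products of two Fej\'er kernels, such as $F^2_{N_{st}}(\lambda_{h-j})$. These are concentrated on $|h-j|=O(1)$ and include the diagonal value $F^2_{N_{st}}(0)=N_{st}^2$.
\item The prefactor $N_{st}^2$ is what produces $k(\min(s,t))/(4\max^2(s,t))$.
\item $k_1$ comes from evaluating $\frac1N\sum_kF^2_{N_{st}}(\lambda_k)$ exactly; the aliased indices $\ell+m=\pm N$ give the $(2s-1)^3$ term.
\end{itemize}

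Second, and more seriously, your list omits the pairings that link the two factors of one product. These pair an index of $I^s_N(\lambda_h)$ with one of $I^s_N(\lambda_{h-1})$ through $\lambda_h-\lambda_{h-1}=\lambda_1$. The paper's Step~3 discards them too, on the grounds that such a kernel has an argument depending only on $h$ or $j$. That holds for links through $\lambda_{2h-1}$, but not for links through $\lambda_{\pm1}$.

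Your own bias step shows that $c_1:=D^+_{N_s}(\lambda_1)/N_s\to \e^{-\imag\pi s}\sin(\pi s)/(\pi s)\neq0$ for $s<1$. This is exactly the correlation behind the factor in $\Psi(s)$, so these pairings are of the retained order $N^5$. Concretely, take $s=t$, $a_h=d^s_{N,\varepsilon}(\lambda_h)/\sqrt{N_s}$ and $r=|c_1|^2$. Then $\operatorname{Var}(|a_h|^2|a_{h-1}|^2)=3+14r+3r^2$, while the terms kept in the stated kernel give only $3+3r+r^2$ at this lag.

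Summing over all lags at $s=t=1/2$ (where $r\to 4/\pi^2$) gives the limiting variance $\tau_{f^2}^2\bigl(\tfrac73+\tfrac{45}{2\pi^2}\bigr)$. The statement gives $\tau_{f^2}^2\bigl(\tfrac73+\tfrac{5}{2\pi^2}\bigr)$. So the direct check you plan cannot succeed. A complete argument must include these pairings, and they change the limiting covariance; they vanish only at $s=t=1$.

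Two smaller points:
\begin{itemize}
\item A remainder bound uniform in $\lambda$ gives only $O_{\Prob}(N^{-1/2})$ after averaging over frequencies. The reduction step therefore needs the cross-frequency cancellation the paper uses.
\item $\Psi_N(s)-\Psi_N(t)$ depends on all observations, not only $O(|s-t|N)$ new ones. The $O(|s-t|+N^{-1})$ second-moment bound for increments must come from the covariance expansion, followed by hypercontractivity.
\end{itemize}
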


The limiting process $\boldsymbol{\mathcal G}$ in Proposition~\ref{Prop1}, upon setting $w=2f$, coincides with that in Theorem~\ref{Thm1}, except for the additional Brownian motion term $\frac{1}{2s^2}\mathbb{B}_2(s^2k(s))$. This term arises from additional contributions to the covariances of products of sequential periodograms. We note that $\boldsymbol{\mathcal G}$ is well-defined, since the map $s\mapsto s^2k(s)\in[0,1]$, appearing as the argument of $\mathbb B_2$, is strictly increasing. Similarly, the factor appearing in the definition of the population counterpart $\Psi(s)$ arises from the first-order structure specific to products of sequential periodograms. At $s=1$, however, $\Psi(1)=\Psi$, thus recovering the non-sequential setting \citep[cf., e.g.,][]{DetteKinsvaterVetter2011}.

Notably, since $\boldsymbol{\mathcal G}$ is the sum of two processes with distinct covariance structures, self-normalization as in Theorem~\ref{Thm1} leads to a non-pivotal limit. However, since, unlike in the linear case, $\Psi_N$ and its integral counterpart are not asymptotically equivalent \citep{DeoChen2000}, one might ask whether a pivotal weak limit for the integrated squared spectral density can instead be obtained by replacing $\Psi_N(s)$ in \eqref{eq7} with a sequential version of the corresponding integral statistic. To investigate this possibility, consider
\begin{align}
    \Psi'_N(s) \coloneqq \int_{-\pi}^{\pi}\big(I_N^s(\lambda)\big)^2\,\mathrm{d}\lambda, \qquad s\in\mathbb D.
\end{align}
The following result derives the limiting covariance structure of $\Psi'_N(s)$ in the particular case where the underlying process is a Gaussian white noise with unit variance. 

\begin{proposition}\label{Prop2} Let $(X_k)$ be a Gaussian white noise with unit variance. Then,
    \begin{align}
        N\Cov\big(\Psi'_N(s), \Psi'_N(t)\big) \stackrel{N \to \infty}{\longrightarrow}
    \frac{4}{\pi^2}
    \bigg(\frac{2}{\max(s,t)}
    + \frac{\min(s,t)}{3\max^2(s,t)}\bigg), \qquad s,t\in \mathbb D.
    \end{align}
\end{proposition}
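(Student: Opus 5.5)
By Parseval, $\Psi'_N(s)$ is a quartic form in the observations, and I would compute its covariance exactly with the Gaussian product (diagram) formula, keeping only the terms of order $N^3$. Put $m=\lfloor sN\rfloor$, $n=\lfloor tN\rfloor$ and assume without loss of generality $s\le t$, so $m\le n$. With the sample autocovariance sums $c_m(h)\coloneqq\sum_{k}X_kX_{k+h}$, where the sum runs over $k$ with $1\le k,k+h\le m$, we have $I_N^s(\lambda)=\frac{1}{2\pi sN}\sum_{|h|<m}c_m(h)\e^{-\imag h\lambda}$. Orthogonality of $\e^{-\imag h\lambda}$ on $[-\pi,\pi]$ then gives
\begin{align*}
\Psi'_N(s)=\frac{1}{2\pi s^2N^2}\,Q_m,\qquad Q_m\coloneqq\sum_{|h|<m}c_m(h)^2 .
\end{align*}
Hence $N\Cov(\Psi'_N(s),\Psi'_N(t))=\frac{1}{4\pi^2s^2t^2N^3}\Cov(Q_m,Q_n)$. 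The claim is therefore equivalent to
\begin{align*}
\Cov(Q_m,Q_n)=32\,m^2n+\tfrac{16}{3}\,m^3+o(N^3).
\end{align*}
This reduction holds because $16N^3s^2t^2\bigl(\frac{2}{t}+\frac{s}{3t^2}\bigr)=32m^2n+\frac{16}{3}m^3+o(N^3)$.

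Next I expand the covariance term by term. Write $Q_m=\sum_{h}c_m(h)^2$ and $Q_n=\sum_{h'}c_n(h')^2$. For jointly Gaussian centred quadratic forms $A=c_m(h)$ and $B=c_n(h')$,
\begin{align*}
\Cov(A^2,B^2)=2\Cov(A,B)^2+4\,\Exp A\,\Exp B\,\Cov(A,B)+4\,\Exp A\,\kappa(A,B,B)+4\,\Exp B\,\kappa(A,A,B)+\kappa(A,A,B,B).
\end{align*}
Since $X$ is white noise, $\Exp c_m(h)=m\mathds 1\{h=0\}$. Every joint cumulant of the $c$'s is a sum over connected pairings of the underlying $X$'s, and each pairing forces index equalities $k_i=k_j$. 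So each term reduces to counting lattice points in a polytope cut out by the constraints $1\le k\le m$ and $1\le k'\le n$.

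The key computations, in order, are the following.
\begin{enumerate}
\item The term $2\Cov(c_m(h),c_n(h'))^2$. Here $\Cov(c_m(h),c_n(h'))$ is nonzero only for $h'=\pm h$, and then it equals, up to lower order, the number of overlapping index pairs, roughly $m-|h|$; the diagonal-type pairing with $h=-h'$ gives a similar contribution. Summing $2\cdot 2(m-|h|)^2$ over $|h|<m$ gives $\frac{8}{3}m^3\cdot 2=\frac{16}{3}m^3$. This should be the $\frac{s}{3t^2}$ part.
\item The $h=0$ and $h'=0$ terms carrying $\Exp A\,\Exp B=mn$. Together with the third-cumulant terms $\kappa(c_m(0),c_n(h'),c_n(h'))$, summed over all $h'$, these should produce the $32m^2n$ contribution, that is, the $\frac{2}{\max(s,t)}$ part. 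One must check carefully that only the range $|h'|<m$ interacts with the shorter sample.
\item The fourth cumulant $\kappa(A,A,B,B)$ summed over $h,h'$. This is a count over connected 8-vertex cyclic pairings with four free indices minus the constraints. I expect it to be either $O(N^2)$ or to contribute to the $m^2n$ coefficient, so it must be computed rather than dismissed.
\end{enumerate}

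Each count is a Riemann sum. After scaling indices by $N$, it converges to a polynomial volume integral in $s$ and $t$, with errors of order $O(N^2)$ from boundary effects.

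The main obstacle is the exhaustive and correct enumeration of the connected pairings for the third and fourth cumulant terms. The indices live on two different ranges $\{1,\dots,m\}\subset\{1,\dots,n\}$, and lags $h$ with $m\le|h|<n$ appear only in $Q_n$. In the non-sequential case $m=n$ many of these counts collapse by symmetry, but here each pairing gives a distinct piecewise-polynomial volume that depends on which indices are confined to the shorter range. I would organise the pairings by their cycle structure, classify by how many cycle indices are forced into $\{1,\dots,m\}$, and compute each resulting volume. As a check, the total at $s=t$ must reduce to the known non-sequential white-noise value $\frac{4}{\pi^2}\cdot\frac{7}{3}$.
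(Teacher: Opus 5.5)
Your reduction to $\Cov(Q_m,Q_n)=32m^2n+\tfrac{16}{3}m^3+o(N^3)$ is correct. The method (Gaussian product formula, keeping only index configurations with three free indices) is the one the paper uses. The gap is in the bookkeeping you actually commit to: followed as written, it gives the wrong constants.

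(i) In the expansion of $\Cov(A^2,B^2)$ there are only two partitions of type $\{A,B,B\}\{A\}$. The third-cumulant terms are therefore $2\Exp A\,\kappa(A,B,B)+2\Exp B\,\kappa(A,A,B)$, with coefficient $2$, not $4$. As a check, take $A=B=X$ with $\Exp X=\mu$: the correct formula gives $\kappa_4+4\mu\kappa_3+2\sigma^4+4\mu^2\sigma^2$, while yours gives $8\mu\kappa_3$.

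(ii) Since $\Cov(c_m(h),c_n(h'))=(m-|h|)(\delta_{h,h'}+\delta_{h,-h'})$, you get $\sum_{h,h'}2\Cov(c_m(h),c_n(h'))^2=8m^2+8\sum_{j=1}^{m-1}(m-j)^2\sim\tfrac83m^3$, not $\tfrac{16}{3}m^3$. Your extra factor $2$ counts $h'=-h$ twice.

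(iii) The fourth cumulant is neither $O(N^2)$ nor purely of type $m^2n$. Summed over $h,h'$ it equals $8m^2n+\tfrac83m^3+O(N^2)$, and this is where the missing $\tfrac83m^3$ comes from. The correct split is $8m^2n$ from each of four terms:
\begin{itemize}
\item $4\Exp A\,\Exp B\,\Cov(A,B)$, which is nonzero only at $h=h'=0$;
\item $2\Exp A\,\kappa(A,B,B)=2m\cdot4mn$;
\item $2\Exp B\,\kappa(A,A,B)=2n\cdot4m^2$;
\item $\kappa(A,A,B,B)$.
\end{itemize}
In addition, $\tfrac83m^3$ comes from $2\Cov(A,B)^2$ and another $\tfrac83m^3$ from $\kappa(A,A,B,B)$.

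(iv) Contrary to your remark in item 2, lags $m\le|h'|<n$ do contribute. One has $\sum_{|h'|<n}\kappa(c_m(0),c_n(h'),c_n(h'))=4mn+O(N)$ precisely because, for each $k\le m$, the partner index $k\pm h'$ ranges over all of $\{1,\dots,n\}$. The same happens in the $m^2n$ part of $\kappa(A,A,B,B)$. Truncating at $|h'|<m$ would turn $m^2n$ into $m^3$ and lose the $1/\max(s,t)$ term.

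The lag-wise cumulant decomposition is what makes this error-prone; the paper's route avoids it. Write $Q_m=\sum X_aX_bX_cX_d$ over $a+d=b+c$ with $a,b,c,d\in\{1,\dots,m\}$, and likewise $Q_n$ with primed indices in $\{1,\dots,n\}$. A connected Wick pairing contributes at order $N^3$ exactly when, after identification, the constraint $a'+d'=b'+c'$ reduces to $a+d=b+c$. This happens in two cases:
\begin{itemize}
\item the $32$ pairings with one opposite-sign internal pair ($\{a,b\}$, $\{c,d\}$, $\{a,c\}$ or $\{b,d\}$) in each group plus two cross pairs, each counting $m\cdot n\cdot m$ index tuples;
\item the $8$ all-cross pairings mapping $\{a,d\}$ onto $\{a',d'\}$ or onto $\{b',c'\}$, each counting $\#\{a+d=b+c\}=m(2m^2+1)/3$.
\end{itemize}
The remaining $56$ connected pairings leave only two free indices. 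This gives $32m^2n+\tfrac{16}{3}m^3+O(N^2)$ directly, which is the count asserted in the paper's proof.
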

Even in this simple Gaussian white-noise setting, the same issue as in 
Proposition~\ref{Prop1} arises. If weak convergence to a centered Gaussian 
process $\boldsymbol{\mathcal G}'=\{\mathcal G'(s)\}_{s\in\mathbb D}$ were 
to hold, it would be given by
\[
    \mathcal G'(s)
    \coloneqq
    \frac{2}{\pi}\bigg(\sqrt{2}\frac{\mathbb B(s)}{s}
    +
    \frac{1}{\sqrt{3s}}\,\mathbb U(-\log(s))\bigg),
    \qquad s\in[1/2,1],
\]
where $\mathbb B = \{\mathbb B(s)\}_{s\in\mathbb D}$ is a Brownian motion and 
$\mathbb U=\{\mathbb U(s)\}_{s\geq0}$ is an independent stationary 
Ornstein--Uhlenbeck process with $\Cov(\mathbb U(s),\mathbb U(t))=e^{-\frac 32|s-t|}.$ Since this limiting process is a sum of two independent components with distinct covariance structures, replacing the discrete statistic by its integral counterpart does not resolve the problem: the resulting self-normalized statistic does not have a pivotal limit of the form obtained in Theorem~\ref{Thm1}.

\begin{remark}
\begin{itemize}
    \item[\textnormal{(a)}]The arguments in Section \ref{Sec21}, where we considered the domain $\mathbb D=[1/2,1]$ for clarity, remain valid with $1/2$ replaced by any $\epsilon\in(0,1)$. In Section~\ref{Sec22}, however, the restriction $\mathbb D\subset[1/2,1]$ is required due to the more intricate covariance structure; see Step~2 of Section~\ref{Sec42} (on page~\pageref{eq47}).
    
    \item[\textnormal{(b)}]The innovation process $(\varepsilon_k)$ is assumed to be Gaussian white noise for simplicity. In this case, the associated periodogram ordinates $I_{N,\varepsilon}(\lambda_j)$ are independent across $j$ and exponentially distributed. Although these properties do not hold in general, our theory extends to centered i.i.d.\ innovations with a finite eighth moment, which is sufficient for the covariance calculations in Section~\ref{Sec22}. In this more general setting, the periodogram ordinates form an asymptotic white noise; see, e.g., \cite[Section 10]{BrockwellDavis1991}. For details on the corresponding covariance calculations, we refer to \citet[][Section A.4]{DetteKuehnert2026}, where distributional symmetry of the innovations is imposed to simplify the covariance calculations.
\end{itemize}
\end{remark}

\section{Discussion}\label{Sec3}

This article establishes weak limits for sequential estimators of spectral density integrals for real-valued linear processes. We consider integrals of the weighted and squared spectral density. For the former, under H\"older continuity of the weight function, we obtain pivotal weak limits free of unknown spectral quantities by using a sequential version of the weighted integrated periodogram. For the latter, we obtain a weak limit given by the sum of two Gaussian processes with distinct covariance structures, which precludes pivotal self-normalization as in the linear case. We further investigate whether this difficulty can be overcome by replacing the discrete estimator with a sequential version of the continuously integrated squared periodogram. For this integral counterpart, we derive the second-order structure, assuming that the process itself is a Gaussian white noise. We find that, even in this simple setting, the same structural difficulty persists, again precluding pivotal self-normalization.

While our analysis of the non-linear case focuses on the squared spectral density, the structure of the required estimators suggests that similar difficulties may arise for other non-linear functions $\phi$. A formal characterization of the class of functions $\phi$ for which $\Phi$ in \eqref{eq1} admits a pivotal weak limit remains an open problem. Extensions to the multivariate setting appear possible with additional technical effort.

\section{Proofs}\label{Sec4}

Throughout the proofs, we use the following notation and conventions. We write $\delta_{ij}$ for the Kronecker delta and $\mathds{1}_A$ for the indicator function of a set $A.$ Further, we use $\propto$ to denote proportionality, $\lesssim$ for inequalities up to a multiplicative constant independent of the variables involved, $\sim$ for asymptotic equivalence, and $\asymp$ for equivalence up to multiplicative constants. Unless stated otherwise, all limits are taken as $N\to\infty$. Functions defined on $[-\pi,\pi]$ are extended $2\pi$-periodically whenever necessary. Further, for brevity, we frequently use the notation
\[
    w_f(x) = w(x)f(x), \qquad f_{[1]}(\lambda_h) = f(\lambda_h)f(\lambda_{h-1}), \qquad g_1\cdots g_n(x) = g_1(x)\cdots g_n(x)\,.
\]

Throughout, $\mathbb D = [1/2,1].$ We define
\begin{align}\label{eq11}
    \widetilde I_N^s(\lambda)\coloneqq2\pi f(\lambda)I^s_{N,\varepsilon}(\lambda), \qquad \lambda \in [-\pi, \pi],~ s\in\mathbb D,
\end{align}
with $I^s_{N,\varepsilon}$ referring to the sequential periodogram in \eqref{eq3} with $X_k$ replaced by $\varepsilon_k$. The upper summation limit of $I^s_{N,\varepsilon}$ is frequently denoted $N_s = \lfloor sN\rfloor,$ and we set $N_{st} \coloneqq \lfloor \min(s,t)N\rfloor.$ We also introduce the (sequential) \emph{discrete Fourier transform} (DFT) as 
\begin{align}\label{eq12}
    J_N^s(\lambda)\coloneqq\frac{1}{\sqrt{2\pi sN}}\sum_{k=1}^{\lfloor sN\rfloor}X_k \e^{-\mathrm{i}k\lambda}, \qquad s \in \mathbb D, ~ \lambda \in [-\pi, \pi],
\end{align}
and write $J^s_{N,\varepsilon}(\lambda)$ if the $X_k's$ are replaced by the $\varepsilon_k's,$ and $J_N(\lambda)$ and $J_{N,\varepsilon}(\lambda)$ for the non-sequential DFTs (i.e.\ where $s=1$). Obviously, by definition,
\begin{align}
    I_N^s(\lambda) = \big|J_N^s(\lambda)\big|^2, \qquad 
    I^s_{N, \varepsilon}(\lambda) = \big|J^s_{N, \varepsilon}(\lambda)\big|^2, \qquad s \in \mathbb D, ~ \lambda \in [-\pi, \pi].
\end{align}
In addition, $\widetilde\Phi_N(s)$ and $\widetilde\Psi_N(s)$ are defined as $\widetilde\Phi_N(s)$ in \eqref{eq4} and $\widetilde\Psi_N(s)$ in \eqref{eq7}, respectively, with $I_N^s$ replaced by $\widetilde I_N^s$, and
\begin{alignat}{4}
    \Phi^{\circ}_N(s) 
    &\coloneqq \Phi_N(s)-\Phi, 
    \qquad && \widetilde\Phi^{\circ}_N(s) &&\coloneqq \widetilde\Phi_N(s)-\Phi,
    \qquad &&s\in\mathbb D,\label{eq13}\\
    \Psi^{\circ}_N(s) 
    &\coloneqq \Psi_N(s)-\Psi(s), 
    \qquad && \widetilde\Psi^{\circ}_N(s) &&\coloneqq \widetilde\Psi_N(s)-\Psi(s),
    \qquad &&s\in\mathbb D,\label{eq14}
\end{alignat}
with $\Phi$ and $\Psi(s)$ from \eqref{eq1} and \eqref{eq8}, respectively.

Our proofs also rely on the following kernels and related quantities; some of their properties are collected in Appendix~\ref{Sec5}. For $K\in\N$, define the full and upper (discrete) \emph{Dirichlet kernels} by
\begin{align}
    D_K(x) \coloneqq \sum_{j=-K}^K \e^{-\imag jx},
    \qquad
    D_K^+(x) \coloneqq \sum_{j=1}^K \e^{-\imag jx},
    \qquad x \in \R,
\end{align}
respectively, and the (discrete) \emph{Fejér kernel} by
\begin{align}
    F_K(x)
    \coloneqq \frac{1}{K}\sum_{j=0}^{K-1} D_j(x)
    = \sum_{|j|<K}\Big(1-\frac{|j|}{K}\Big)\e^{-\imag jx},
    \qquad x \in \R.
\end{align}
The Fejér kernel is symmetric and non-negative, with $F_K(0)=K$. Finally, for $g:[-\pi,\pi]\to\R$, define the (discrete) \emph{Fejér mean} of order $K\in\N$ by
\begin{align}
    \sigma_K(g,x)
    \coloneqq \frac{1}{N}\sum_{j=0}^{N-1}g(\lambda_j)F_K(x-\lambda_j),
    \qquad x \in \R.
\end{align}
We also use $n$th-order cumulants $\kappa(X_1,\dots,X_n)$ and write $\kappa_n(X)\coloneqq\kappa(X,\dots,X)$; see \citet[Section~2.3]{Brillinger2001} for their definition and properties.

\subsection{Proof of Theorem \ref{Thm1}}\label{Sec41}

The claim follows by establishing the postulated weak convergence in \eqref{eq5} in $\ell^\infty(\mathbb D)$ and verifying the proposed self-normalization. Related weak convergence results for processes with a similar triangular-array structure were obtained by \citet{ChenRomano1999}, but are not directly applicable here. We therefore consider the more tractable process $\widetilde{\boldsymbol{\mathcal F}}_N$, defined as $\boldsymbol{\mathcal F}_N$ in \eqref{eq5} with $\Phi_N$ replaced by $\widetilde{\Phi}_N$ from \eqref{eq13}. Weak convergence transfers by Slutsky's theorem to $\boldsymbol{\mathcal F}_N$ if
\[
    \big\|\widetilde{\boldsymbol{\mathcal F}}_N
    -\boldsymbol{\mathcal F}_N\big\|_\infty
    =\,
    \sup_{s\in\mathbb D}
    \big|\widetilde{\mathcal F}_N(s)-\mathcal F_N(s)\big|
    \,=\,
    o_{\Prob}(1),
\]
which follows from
\begin{align}\label{eq15}
\sup_{s\in\mathbb D}
\big|\widetilde{\Phi}_N(s)-\Phi_N(s)\big|
&=
o_{\Prob}\big(N^{-1/2}\big).
\end{align}

The proof is organized as follows. Step~1 establishes \eqref{eq15}. Steps~2--3 characterize the first- and second-order structure of $\widetilde{\Phi}_N$, providing the ingredients for Steps~4--5, which establish finite-dimensional (fidi) convergence and asymptotic tightness of $\widetilde{\boldsymbol{\mathcal F}}_N$, respectively. These results, together with \eqref{eq15} and Slutsky's theorem, yield the weak convergence in \eqref{eq5}. Step~6 then verifies the proposed self-normalization.

\paragraph{Step~1.}
The proof of \eqref{eq15} builds on arguments from \citet{DetteKinsvaterVetter2011} for the corresponding non-sequential setting $(s=1)$, as well as on ideas and representations from \citet[][Section~10]{BrockwellDavis1991}. In what follows, let
\begin{align}\label{eq16}
\Xi(\lambda)
\coloneqq
\sum^\infty_{\ell=-\infty}\psi_\ell\e^{-\imag\ell\lambda},
\qquad \lambda\in[-\pi,\pi].
\end{align}
Let $m\in\{\lfloor N/2\rfloor,\ldots,N\}$, as
$\lfloor N/2\rfloor\leq\lfloor Ns\rfloor\leq N$ for
$s\in\mathbb D=[1/2,1]$. For the (non-sequential) DFTs in
\eqref{eq12} (where $s=1$), the linear representation in \eqref{eq2}
yields
\begin{align}\label{eq17}
J_m(\lambda)
&=
\Xi(\lambda)J_{m,\varepsilon}(\lambda)
+Y_m(\lambda),
\qquad \lambda\in[-\pi,\pi],
\end{align}
where
\begin{alignat}{2}
    Y_m(\lambda)
    &\coloneqq
    \frac{1}{\sqrt{2\pi m}}
    \sum^\infty_{\ell=-\infty}
    \psi_\ell\e^{-\imag\ell\lambda}
    U_{m,\ell}(\lambda),
    &&\qquad \lambda\in[-\pi,\pi], \label{eq18}\\
    U_{m,\ell}(\lambda)
    &\coloneqq
    \sum_{r=1-\ell}^{m-\ell}
    \varepsilon_r\e^{-\imag r\lambda}
    -
    \sum_{r=1}^{m}
    \varepsilon_r\e^{-\imag r\lambda},
    &&\qquad \lambda\in[-\pi,\pi].
\end{alignat}
By the definition of $\widetilde I_m(\lambda)=\widetilde I_m^s(\lambda)$
in \eqref{eq11} and \eqref{eq17},
\begin{align}\label{eq19}
R_m(\lambda)
\coloneqq
I_m(\lambda)-\widetilde I_m(\lambda)
&=
2\Re\!\left(
\Xi(\lambda)J_{m,\varepsilon}(\lambda)
\overline{Y_m(\lambda)}
\right)
+
|Y_m(\lambda)|^2,
\end{align}
where $\Re(z)$ denotes the real part of $z\in\C$. We next derive
asymptotic upper bounds for these boundary terms uniformly over $m$.

\bigskip

\noindent{\bf Step~1.1.}
For every $\ell\in\mathbb Z$, $U_{m,\ell}$ admits the representation
\[
    U_{m,\ell}(\lambda)
    =
    \sum_{r\in B_{m,\ell}}
    c_{m,\ell,r}\varepsilon_r\e^{-\imag r\lambda},
\]
for a suitable index set $B_{m,\ell}\subset\mathbb Z$ and coefficients
$c_{m,\ell,r}\in\{-1,1\}$ satisfying
\[
    |B_{m,\ell}|
    \leq
    2\min(|\ell|,m).
\]
Since $(\varepsilon_k)$ is a Gaussian white noise, it follows that
$$
    \sup_{\lambda\in[-\pi,\pi]}
    \sup_{\lfloor N/2\rfloor\leq m\leq N}
    \Exp|U_{m,\ell}(\lambda)|^2
    \lesssim |\ell|.
$$
Hence, by the definition of $Y_m(\lambda)$ in \eqref{eq18} and
Minkowski's inequality,
\begin{align}
\sup_{\lambda\in[-\pi,\pi]}
\sup_{\lfloor N/2\rfloor\leq m\leq N}
\nu_2\big(Y_m(\lambda)\big)
&\lesssim
\bigg(\frac{1}{2\pi\lfloor N/2\rfloor}\bigg)^{1/2}
\sum^\infty_{\ell=-\infty}
|\ell|^{1/2}|\psi_\ell|
=
O(N^{-1/2}),
\end{align}
where $\nu_p(X)\coloneqq(\Exp|X|^p)^{1/p}$. The last equality follows
from the summability condition in \eqref{eq2}.

Furthermore, for every fixed $q\geq1$, since $U_{m,\ell}$ is a sum of independent, centered Gaussian white noise innovations, which in particular have finite $2q$th moments, the Marcinkiewicz--Zygmund inequality yields
\[
    \sup_{\lambda\in[-\pi,\pi]}
    \sup_{\lfloor N/2\rfloor\leq m\leq N}
    \nu_{2q}\big(U_{m,\ell}(\lambda)\big)
    \lesssim
    |\ell|^{1/2}.
\]
Hence, another application of Minkowski's inequality gives
\begin{align}\label{eq20}
    \sup_{\lambda\in[-\pi,\pi]}
    \sup_{\lfloor N/2\rfloor\leq m\leq N}
    \nu_{2q}\big(Y_m(\lambda)\big)
    =
    O(N^{-1/2}).
\end{align}

\medskip

\noindent{\bf Step~1.2.} We first derive an upper bound that will be useful later in the proof. Since the remainder term in \eqref{eq15} involves the frequencies $j=1,\ldots,\lfloor N/2\rfloor$, define
$$
    \xi_{N,r}
    \coloneqq
    \frac{1}{N}
    \sum_{j=1}^{\lfloor N/2\rfloor}
    w(\lambda_j)\Xi(\lambda_j)
    \e^{-\imag r\lambda_j},
    \qquad r\in\mathbb Z.
$$
By construction, $\xi_{N,r}$ is $N$-periodic in $r$. Moreover,
Parseval's identity for the discrete Fourier transform and the boundedness of $w$ and $\Xi$ give
\begin{align}\label{eq21}
    \sum_{r=0}^{N-1}|\xi_{N,r}|^2
    &=
    \frac{1}{N}
    \sum_{j=1}^{\lfloor N/2\rfloor}
    |w(\lambda_j)\Xi(\lambda_j)|^2
    = O(1),
\end{align}
uniformly in $N.$ For completeness, the boundedness of $\Xi$ follows directly from
$\sum_\ell|\psi_\ell|<\infty$, which is implied by the summability
condition in \eqref{eq2}. In addition, $\Xi$ is H\"older continuous of order $\alpha$. Indeed, since $\lambda\mapsto \e^{-\imag\lambda}$ is Lipschitz continuous and hence
H\"older continuous of order $\alpha\in(1/2,1]$, we have
\begin{align*}
    |\Xi(\lambda)-\Xi(\lambda')|
    &\lesssim
    \bigg(\sum_{\ell=-\infty}^{\infty}
    |\ell|^\alpha|\psi_\ell|\bigg)
    |\lambda-\lambda'|^\alpha,
\end{align*}
where the sum is finite by \eqref{eq2}. Combined with the assumed
H\"older continuity of $w$ of order $\beta\in(1/2,1]$, this shows that
$\lambda\mapsto w(\lambda)\Xi(\lambda)$ is H\"older continuous of order
$\gamma=\min(\alpha,\beta)\in(1/2,1]$.

\bigskip

\noindent{\bf Step~1.3.}
We next control the two remainder terms in \eqref{eq19} after
summation over the Fourier frequencies. Since $   \min(|\ell|,N) \leq N^{1-\alpha}|\ell|^\alpha$ for any $\ell \in \Z,$ the summability condition in \eqref{eq2} implies
\begin{align}\label{eq22}
\sum_{\ell=-\infty}^{\infty}
\min(|\ell|,N)|\psi_\ell|
&\leq
N^{1-\alpha}
\sum_{\ell=-\infty}^{\infty}
|\ell|^\alpha|\psi_\ell|
=
O(N^{1-\alpha})
=
o(N^{1/2}),
\end{align}
where the last relation follows from $\alpha\in(1/2,1]$.

We first consider the linear term in \eqref{eq19}. By the
representation of $U_{m,\ell}$ from Step~1.1 and the definitions of
$J_{m,\varepsilon}$ and $Y_m$, summation over the Fourier frequencies
$j=1,\ldots,\lfloor N/2\rfloor$ gives
\begin{align*}
\sum_{j=1}^{\lfloor N/2\rfloor}
w(\lambda_j)\Xi(\lambda_j)
J_{m,\varepsilon}(\lambda_j)
\overline{Y_m(\lambda_j)}
&=
\frac{N}{2\pi m}
\sum_{\ell=-\infty}^{\infty}
\psi_\ell
\sum_{r\in B_{m,\ell}}
c_{m,\ell,r}\varepsilon_r
\sum_{t=1}^{m}
\varepsilon_t
\xi_{N,t-\ell-r}.
\end{align*}
Since $m\leq N,$ as $\xi_{N,r}$ is $N$-periodic in $r$, and because of
\eqref{eq21}, 
\begin{align*}
\sum_{t=1}^{m}
|\xi_{N,t-\ell-r}|^2
\leq
\sum_{u=0}^{N-1}|\xi_{N,u}|^2
= O(1),
\end{align*}
uniformly in $N,m,\ell,$ and $r.$ Hence, for every $q\geq1$, the Marcinkiewicz--Zygmund inequality yields
\begin{align*}
    \sup_{\substack{\lfloor N/2\rfloor\leq m\leq N\\
                    \ell,r\in\mathbb Z}}\nu_{2q}\bigg(
    \sum_{t=1}^{m}
    \varepsilon_t\xi_{N,t-\ell-r}
    \bigg)
    \,\lesssim\,
    \sup_{\substack{\lfloor N/2\rfloor\leq m\leq N\\
                    \ell,r\in\mathbb Z}}\bigg(
    \sum_{t=1}^{m}
    |\xi_{N,t-\ell-r}|^2
    \bigg)^{1/2}
    \,=\,
    O(1)\,.
\end{align*}
Using Minkowski's inequality, followed by H\"older's inequality, we consequently obtain
\begin{align*}
    &\sup_{\lfloor N/2\rfloor\leq m\leq N}
    \nu_q\bigg(
    \sum_{j=1}^{\lfloor N/2\rfloor}
    w(\lambda_j)\Xi(\lambda_j)
    J_{m,\varepsilon}(\lambda_j)
    \overline{Y_m(\lambda_j)}
    \bigg)
    \\
    &\lesssim
    \frac{N}{m}
    \sum_{\ell=-\infty}^{\infty}
    |\psi_\ell|
    \sum_{r\in B_{m,\ell}}
    |c_{m,\ell,r}|
    \nu_{2q}(\varepsilon_r)
    \nu_{2q}\bigg(
    \sum_{t=1}^{m}
    \varepsilon_t\xi_{N,t-\ell-r}
    \bigg)
    \\
    &\lesssim
    N^{1-\alpha},
\end{align*}
where we used $m\asymp N$, $|B_{m,\ell}|\leq2\min(|\ell|,m)$, and
\eqref{eq22}.

Next, choose $q$ sufficiently large such that $1/q < \alpha-1/2.$ Since there are at most $N$ possible values of $m$, the elementary
inequality
$$
    \max_m|Z_m|
    \leq
    \bigg(\sum_m|Z_m|^q\bigg)^{1/q}
$$
gives
\begin{align}\label{eq23}
&\nu_q\Bigg(
\max_{\lfloor N/2\rfloor\leq m\leq N}
\Bigg|
\sum_{j=1}^{\lfloor N/2\rfloor}
w(\lambda_j)\Xi(\lambda_j)
J_{m,\varepsilon}(\lambda_j)
\overline{Y_m(\lambda_j)}
\Bigg|\,
\Bigg)
\nonumber\\
&\leq
N^{1/q}
\sup_{\lfloor N/2\rfloor\leq m\leq N}
\nu_q\Bigg(
\sum_{j=1}^{\lfloor N/2\rfloor}
w(\lambda_j)\Xi(\lambda_j)
J_{m,\varepsilon}(\lambda_j)
\overline{Y_m(\lambda_j)}
\Bigg)
\notag\\[1ex]
&=
O(N^{1-\alpha+1/q})
=
o(N^{1/2}).
\end{align}

It remains to control the quadratic term in \eqref{eq19}.
By \eqref{eq20}, for every fixed $q\geq1$,

$$
    \sup_{\lambda\in[-\pi,\pi]}
    \sup_{\lfloor N/2\rfloor\leq m\leq N}
    \nu_{2q}\big(Y_m(\lambda)\big)
    =
    O(N^{-1/2}).
$$
Hence, boundedness of $w$ and Minkowski's inequality yield
\begin{align*}
\sup_{\lfloor N/2\rfloor\leq m\leq N}
\nu_q\Bigg(
\sum_{j=1}^{\lfloor N/2\rfloor}
w(\lambda_j)|Y_m(\lambda_j)|^2
\Bigg)
&\lesssim
\sum_{j=1}^{\lfloor N/2\rfloor}
\sup_{\lfloor N/2\rfloor\leq m\leq N}
\nu_{2q}\big(Y_m(\lambda_j)\big)^2
=
O(1).
\end{align*}
Choosing $q>2$ and arguing as above yields
\begin{align}
\nu_q\Bigg(
\max_{\lfloor N/2\rfloor\leq m\leq N}
\Bigg|
\sum_{j=1}^{\lfloor N/2\rfloor}
w(\lambda_j)|Y_m(\lambda_j)|^2
\Bigg|
\Bigg)
\nonumber
&\leq
N^{1/q}
\sup_{\lfloor N/2\rfloor\leq m\leq N}
\nu_q\Bigg(
\sum_{j=1}^{\lfloor N/2\rfloor}
w(\lambda_j)|Y_m(\lambda_j)|^2
\Bigg)\notag\\[1ex]
&=
O(N^{1/q})
=
o(N^{1/2}).
\label{eq24}
\end{align}

Together with \eqref{eq19}, \eqref{eq23}, and
\eqref{eq24}, and using Lyapunov's inequality,  we thus obtain
$$
    \Exp\!\Bigg(
        \max_{\lfloor N/2\rfloor\leq m\leq N}
        \Bigg|
            \sum_{j=1}^{\lfloor N/2\rfloor}
            w(\lambda_j)R_m(\lambda_j)
        \Bigg|
    \Bigg)
    =
    o(N^{1/2}).
$$
Hence, by Markov's inequality,
\begin{align}\label{eq25}
\max_{\lfloor N/2\rfloor\leq m\leq N}
\Bigg|
\sum_{j=1}^{\lfloor N/2\rfloor}
w(\lambda_j)R_m(\lambda_j)
\Bigg|
=
o_{\Prob}(N^{1/2}).
\end{align}

Finally, let $N_s=\lfloor sN\rfloor$. By the definitions of
$I_N^s$ and $\widetilde I_N^s$, and since $N_s/(sN)\leq1$ for
$s\in\mathbb D=[1/2,1]$, \eqref{eq25} implies
\begin{align*}
    \sup_{s\in\mathbb D}
    |\Phi_N(s)-\widetilde\Phi_N(s)|
    &=
    \frac{4\pi}{N}
    \sup_{s\in\mathbb D}
    \Bigg|
    \frac{N_s}{sN}
    \sum_{j=1}^{\lfloor N/2\rfloor}
    w(\lambda_j)R_{N_s}(\lambda_j)
    \Bigg|
\\[1ex]
&=
o_{\Prob}(N^{-1/2}),
\end{align*}
which proves \eqref{eq15}.

\paragraph{Step~2.}
Here, we show
\begin{align}\label{eq26}
    \sup_{s\in\mathbb D}\,\big|\Exp(\widetilde\Phi^{\circ}_N(s))\big| = o(N^{-1/2}),
\end{align}
with $\widetilde\Phi^{\circ}_N(s) = \widetilde\Phi_N(s) - \Phi.$ As $(\varepsilon_k)$ is a Gaussian white noise with unit variance, $\Exp(\varepsilon_m\varepsilon_n) = \delta_{mn}$ (the Kronecker delta). Thus, for the sequential periodogram $I^s_{N,\varepsilon}$ in \eqref{eq3} (with $X_k$ replaced by $\varepsilon_k$), we obtain
\begin{align*}
    \Exp\!\big(I^s_{N,\varepsilon}(\lambda_j)\big) \,=\, 
    \frac{1}{2\pi sN}\sum_{m,n=1}^{N_s}\Exp(\varepsilon_m\varepsilon_n)\e^{-\imag(m-n)\lambda_j}
    \,=\, \frac{N_s}{2\pi sN}\,, \qquad 1 \leq h,j \leq \lfloor N/2\rfloor.
\end{align*}
Further, as the weight function $w$ and the spectral density $f$ are symmetric and H\"older continuous of orders $\alpha, \beta,$ respectively, $x \mapsto w_f(x) = w(x)f(x)$ is symmetric and H\"older continuous of order $\gamma = \min(\alpha, \beta).$ Hence, as $\lambda_j - \lambda_{j-1} = 2\pi/N,$ and since $N_s = \lfloor sN\rfloor \asymp N,$ where $s \in [1/2,1],$
\begin{align}
    \big|\Exp(\widetilde\Phi^{\circ}_N(s))\big|
    &= \Bigg|\,\frac{8\pi^2}{N}\sum_{j=1}^{\lfloor N/2\rfloor}
    w_f(\lambda_j)
    \Exp\!\big(I^s_{N,\varepsilon}(\lambda_j)\big)
    - \int^\pi_{-\pi}w_f(\lambda)\,\mathrm{d}\lambda\,\Bigg|
    \notag\\
    &= 2\,\Bigg|\,
    \frac{2\pi N_s}{sN^2}\sum_{j=1}^{\lfloor N/2\rfloor}
    w_f(\lambda_j)
    - \int_0^\pi w_f(\lambda)\,\mathrm{d}\lambda\,\Bigg|\notag\allowdisplaybreaks\\
    &\leq 2\sum_{j=1}^{\lfloor N/2\rfloor}\int^{\lambda_j}_{\lambda_{j-1}}
    \underbrace{\,\big|w_f(\lambda_j)
    -  w_f(\lambda)\big|\,}_{\lesssim\;|\lambda_j - \lambda|^\gamma \;\leq \;(2\pi/N)^\gamma}\,\mathrm{d}\lambda  + \underbrace{\,\bigg|\frac{\lfloor sN\rfloor}{sN} -1 \bigg|\,\Bigg|\,\frac{4\pi}{N}\sum_{j=1}^{\lfloor N/2\rfloor}
    w_f(\lambda_j)\,\Bigg|\,}_{=\,O(N^{-1})\,}\notag\\
    &= O(N^{-\gamma})\,,\notag
\end{align}
uniformly in $s.$ Hence, since $\alpha, \beta \in (1/2,1],$ and thus $\gamma = \min(\alpha, \beta) \in (1/2,1],$ \eqref{eq26} follows.

\paragraph{Step~3.} Next, we determine the limiting covariances
\begin{align}
    \lim_{N\to \infty}N\Cov\big(\widetilde \Phi^{\circ}_N(s), \widetilde \Phi^{\circ}_N(t)\big), \qquad s,t \in \mathbb D.
\end{align}
Recall that $N_s = \lfloor sN\rfloor$ and $N_{st} = \lfloor \min(s,t)N\rfloor.$ Since $\Cov(\varepsilon_m\varepsilon_n,\varepsilon_o\varepsilon_p) = \delta_{mp}\delta_{no} + \delta_{mo}\delta_{np}$ by Isserlis's theorem,
\begin{align}
    \Cov\!\big(I^s_{N,\varepsilon}(\lambda_h), I^t_{N,\varepsilon}(\lambda_j)\big) 
    &= \frac{1}{4\pi^2stN^2}\sum_{m,n=1}^{N_s}\sum_{o,p=1}^{N_t}\Cov(\varepsilon_m\varepsilon_n,\varepsilon_o\varepsilon_p)\e^{-\imag[(m-n)\lambda_h+(o-p)\lambda_j]}\notag\\
    &= \frac{1}{4\pi^2stN^2}\bigg(\sum_{m,n=1}^{N_{st}}\Big[\e^{-\imag(m-n)\lambda_{h-j}} + \e^{-\imag(m-n)\lambda_{h+j}}\Big]\bigg)\notag\\
    &= \frac{N_{st}}{4\pi^2stN^2}\big[
    F_{N_{st}}(\lambda_{h+j}) + F_{N_{st}}(\lambda_{h-j})
    \big],\notag
\end{align}
where $F_{N_{st}}$ is the Fejér kernel of order $N_{st}$ and
$\sigma_{N_{st}}$ the associated Fejér mean we use in the following; see the beginning of this section and Appendix \ref{Sec5}. By symmetry and
$2\pi$-periodicity of $w, f$, and the Fejér kernel, with $w_f(x) = w(x)f(x),$ we obtain for odd $N$ (the even case follows analogously)
\begin{align*}
    &4\sum_{h,j=1}^{\lfloor N/2 \rfloor}
    w_f(\lambda_h)w_f(\lambda_j)
    \big[
        F_{N_{st}}(\lambda_{h+j})
        +
        F_{N_{st}}(\lambda_{h-j})
    \big]\\
    &=
    \sum_{h,j=0}^{N-1}
    w_f(\lambda_h)w_f(\lambda_j)
    \big[
        F_{N_{st}}(\lambda_{h+j})
        +
        F_{N_{st}}(\lambda_{h-j})
    \big]
    -4w_f(0)
    \sum_{h=0}^{N-1}
    w_f(\lambda_h)F_{N_{st}}(\lambda_h)
    +2N_{st}w^2_f(0)
    \\
    &=
    2N\sum_{h=0}^{N-1}
    w_f(\lambda_h)
    \sigma_{N_{st}}(w_f,\lambda_h)
    -4Nw_f(0)\sigma_{N_{st}}(w_f,0)
    +2N_{st}w^2_f(0),
\end{align*}
where the last two summands are $O(N)$ due to boundedness of $w_f$ and Lemma \ref{Lem3}. Hence,
\begin{align}
    N\Cov\!\big(\widetilde \Phi^{\circ}_N(s), \widetilde \Phi^{\circ}_N(t)\big)
    &= \frac{64\pi^4}{N}\sum_{h,j=1}^{\lfloor N/2\rfloor}w_f(\lambda_h)w_f(\lambda_j)\Cov\!\big(I^s_{N,\varepsilon}(\lambda_h), I^t_{N,\varepsilon}(\lambda_j)\big)\notag\allowdisplaybreaks\\
    &\sim \frac{8\pi^2}{\max(s,t)N}\sum_{h=0}^{N-1}w_f(\lambda_h)\sigma_{N_{st}}(w_f,\lambda_h)\notag\allowdisplaybreaks\\
    &\!\stackrel{N \to \infty}{\longrightarrow}\, \frac{4\pi}{\max(s,t)}\int^\pi_{-\pi}w^2_f(\lambda)\,\mathrm{d}\lambda
    \,=\,
    \tau^2_{w_f}\,\frac{1}{\max(s,t)}\,,
\end{align}
where we used that $w_f$ is H\"older continuous of order $\gamma \in(1/2,1]$, together with the uniform convergence of the Fejér mean $\sigma_{N_{st}}$ established in Lemma~\ref{Lem3}, noting that $N_{st}\asymp N$ for $s,t\in\mathbb D$.

\paragraph{Step~4.} Here, we prove fidi convergence of \eqref{eq5}. As the limiting process $\boldsymbol{\mathcal F}$ therein is claimed to be centered Gaussian, we show for arbitrary $n\in\N$ and $s_1,\dots,s_n \in \mathbb{D} =[1/2, 1],$
\begin{align}
    \sqrt{N}\bigl(\widetilde \Phi^{\circ}_N(s_1), \dots, \widetilde \Phi^{\circ}_N(s_n)\bigr) 
    \,\stackrel{d}{\longrightarrow}\, \mathcal{N}\bigl(0,\Sigma_\Phi(s_1,\dots,s_n)\bigr),
\end{align}
for some non-negative matrix $\Sigma_\Phi(s_1,\dots,s_n)$. By the Cramér--Wold device, it suffices to show that every linear combination of the pre-limit is asymptotically centered and Gaussian. This is given if the first moment converges to zero (see Step 2), the limiting covariance is non-degenerate (see Step 3), and all cumulants of order $k\geq3$ vanish asymptotically. It remains to verify the latter property. For clarity, we provide the argument for $k=3.$ Fix any $t_1,t_2,t_3\in\mathbb D$ and, without loss of generality, assume that $t_1\geq t_2\geq t_3$. Indeed, due to multilinearity and translation invariance of cumulants, boundedness of $w_f,$ and since $N_{t_1} \geq N_{t_2} \geq N_{t_3},$ where $N_{t_i} = \lfloor t_iN\rfloor,$ by the inequality $|D^+_K(x)| \leq K$ and Lemma \ref{Lem3}, we obtain
\begin{align} 
    &\Big|\kappa\Big(\sqrt{N}\widetilde \Phi^{\circ}_N(t_1), \sqrt{N}\widetilde \Phi^{\circ}_N(t_2), \sqrt{N}\widetilde \Phi^{\circ}_N(t_3)\Big)\Big|\notag\\[1ex]
    &= N^{-3/2}\big|\kappa\big(\widetilde \Phi_N(t_1), \widetilde \Phi_N(t_2), \widetilde \Phi_N(t_3)\big)\big|\notag\\
    &\propto N^{-3/2}\bigg|
    \sum_{j,k,\ell=1}^{\lfloor N/2\rfloor}
    w_f(\lambda_j)w_f(\lambda_k)w_f(\lambda_\ell)\,
    \kappa\big(
    I^{t_1}_{N,\varepsilon}(\lambda_j),
    I^{t_2}_{N,\varepsilon}(\lambda_k),
    I^{t_3}_{N,\varepsilon}(\lambda_\ell)
    \big)
    \bigg|
    \notag\allowdisplaybreaks\\
    &\lesssim N^{-9/2}\sum_{j, k, \ell=1}^{\lfloor N/2\rfloor}\,\bigg|\sum_{m, n=1}^{N_{t_1}}\sum_{o,p=1}^{N_{t_2}}\sum_{q,r=1}^{N_{t_3}}\e^{-\imag[(m - n)\lambda_{j} + (o - p)\lambda_{k} + (q - r)\lambda_{\ell}]}\kappa(\varepsilon_m\varepsilon_n, \varepsilon_o\varepsilon_p, \varepsilon_q\varepsilon_r)\bigg|\notag\allowdisplaybreaks\\
    &\lesssim N^{-9/2}\bigg(
    \sum_{j,k,\ell=1}^{\lfloor N/2\rfloor}
    \big|
    D^+_{N_{t_2}}(\lambda_{j+k})D^+_{N_{t_3}}(\lambda_{\ell-j})D^+_{N_{t_3}}(\lambda_{-k-\ell})
    \big|
    \,+\, \cdots
    \bigg)\notag \allowdisplaybreaks\\
    &\leq N^{-9/2}\bigg(
    N\sum_{j=1}^{\lfloor N/2\rfloor}
    \big|D^+_{N_{t_2}}(\lambda_j)\big|
    \sum_{k,\ell=1}^{\lfloor N/2\rfloor}
    \big|D^+_{N_{t_3}}(\lambda_{-k-\ell})\big|
    \,+\, \cdots
    \bigg)\notag \allowdisplaybreaks\\
    &= N^{-9/2}\Big(O\big(N^4\log^2(N)\big) \,+\, \cdots \Big)\notag\\
    &= o(1).\notag
\end{align}
The higher-order cumulants are treated analogously: for every fixed $k>3$, the connected Gaussian pairings yield the same type of products of upper Dirichlet kernels and, by Lemma \ref{Lem1}, an $o(1)$ bound after normalization.

\paragraph{Step~5.} Herein, we establish asymptotic tightness via moment bounds for increments  \citep[][Example 2.2.7]{VaartWellner2023}. Recall the notation $N_s = \lfloor sN\rfloor$ and $N_{st} = \lfloor \min(s,t)N\rfloor,$ where $s,t \in \mathbb D = [1/2,1].$ Throughout, let $s\geq t$ without loss of generality. Thus,
\begin{align*}
    N \geq N_s \geq N_t \geq N_{st},
\end{align*}
where these quantities are by their definition all asymptotically equivalent up to a factor, so
\begin{align*}
    N 
    \asymp N_s \asymp N_t \asymp N_{st}.
\end{align*}

Now, we derive moment bounds of the increments of our process. Since
$\widetilde I_N^s(\lambda_j) = 2\pi f(\lambda_j)I^s_{N,\varepsilon}(\lambda_j),$
\begin{align}
    N\Exp\bigl|\widetilde\Phi^{\circ}_N(s)-\widetilde\Phi^{\circ}_N(t)\bigr|^2
    &= \frac{64\pi^4}{N}\Exp \!\Bigg(\sum_{j=1}^{\lfloor N/2\rfloor}w_f(\lambda_j)\Big[I^s_{N,\varepsilon}(\lambda_j) - I^t_{N,\varepsilon}(\lambda_j) - \Exp\!\big(I^s_{N,\varepsilon}(\lambda_j) - I^t_{N,\varepsilon}(\lambda_j)\big)\Big]\Bigg)^{\!2}\notag\\
    &= \frac{4\pi^2}{N}\sum_{h,j=1}^{\lfloor N/2\rfloor}w_f(\lambda_h)w_f(\lambda_j)\,\kappa\Big(I^s_{N,\varepsilon}(\lambda_h) - I^t_{N,\varepsilon}(\lambda_h), I^s_{N,\varepsilon}(\lambda_j) - I^t_{N,\varepsilon}(\lambda_j)\Big).
\end{align}
Since $N_s \geq N_t,$ we have  $\kappa(I^s_{N,\varepsilon}(x), I^t_{N,\varepsilon}(y)) = \tfrac{t}{s}\kappa(I^t_{N,\varepsilon}(x), I^t_{N,\varepsilon}(y))$ by Isserlis's theorem. Thus, together with $1 + \frac{s(s-2t)}{t^2}= \frac{(s-t)^2}{t^2},$ we obtain
\begin{align*}
    &\kappa\Big(I^s_{N,\varepsilon}(\lambda_h) - I^t_{N,\varepsilon}(\lambda_h), I^s_{N,\varepsilon}(\lambda_j) - I^t_{N,\varepsilon}(\lambda_j)\Big)\notag\\
    &=
    \frac{1}{4\pi^2s^2N^2}\bigg(
    N_s\big[F_{N_s}(\lambda_{h+j}) + F_{N_s}(\lambda_{h-j})\big]   
    +
    \frac{s(s-2t)}{t^2}N_t\big[F_{N_t}(\lambda_{h+j}) + F_{N_t}(\lambda_{h-j})\big]
    \bigg)\notag\allowdisplaybreaks\\
    &=
    \frac{1}{4\pi^2s^2N^2}\bigg(
    (N_s\!-\!N_t)\big[F_{N_s}(\lambda_{h+j}) + F_{N_s}(\lambda_{h-j})\big] + N_t\Big[F_{N_s}(\lambda_{h+j}) + F_{N_s}(\lambda_{h-j}) - F_{N_t}(\lambda_{h+j}) - F_{N_t}(\lambda_{h-j})\Big]\notag\\
    &\qquad\qquad\quad + \frac{(s-t)^2}{t^2}N_t\big[F_{N_t}(\lambda_{h+j}) + F_{N_t}(\lambda_{h-j})\big]
    \bigg).
\end{align*}
Subsequently, we proceed as in Step~3 and express the corresponding sums
in terms of the Fejér means. Since
$1\geq s \geq t>\tfrac12$, we have
\[
    N_s\!-\!N_t\leq 1+|s-t|N,
    \qquad
    (s-t)^2\leq |s-t|\leq 1.
\]
Further, as $w_f$ is H\"older continuous, Lemma~\ref{Lem3} implies that $\sigma_{N_s}(w_f,\lambda_h)$ and $\sigma_{N_t}(w_f,\lambda_h)$ are uniformly bounded in $h.$ Hence, together with the arguments from Step~3 for the difference of the Fejér means, it follows for odd $N$ (the even case follows analogously) that
\begin{align*}
    &N\Exp\bigl|
    \widetilde\Phi^{\circ}_N(s)-\widetilde\Phi^{\circ}_N(t)
    \bigr|^2
    \\
    &=
    \frac{8\pi^2}{s^2N^2}
    \sum_{h=0}^{N-1}
    w_f(\lambda_h)
    \bigg(
        (N_s\!-\!N_t)\sigma_{N_s}(w_f,\lambda_h) + N_t\Big[
            \sigma_{N_s}(w_f,\lambda_h)
            -\sigma_{N_t}(w_f,\lambda_h)
            +\frac{(s-t)^2}{t^2}
             \sigma_{N_t}(w_f,\lambda_h)
        \Big]
    \bigg)
    \\
    &\quad\;\,
    -\frac{16\pi^2}{s^2N^2}w_f(0)
    \bigg(
        (N_s\!-\!N_t)\sigma_{N_s}(w_f,0)
        +N_t\Big[
            \sigma_{N_s}(w_f,0)
            -\sigma_{N_t}(w_f,0)
            +\frac{(s-t)^2}{t^2}
             \sigma_{N_t}(w_f,0)
        \Big]
    \bigg)
    \\
    &\quad\;\,
    +\frac{8\pi^2}{s^2N^3}w^2f^2(0)
    \bigg(
        (N_s\!-\!N_t)(N_s\!+\!N_t)
        +\frac{(s-t)^2}{t^2}N_t^2
    \bigg)
    \allowdisplaybreaks\\
    &\lesssim
    \frac{N_s\!-\!N_t}{N}
    +
    \frac{1}{N}
    \bigg|
    \sum_{h=0}^{N-1}
    w_f(\lambda_h)
    \Big(
        \sigma_{N_s}(w_f,\lambda_h)
        -\sigma_{N_t}(w_f,\lambda_h)
    \Big)
    \bigg|
    +(s-t)^2+\frac{1}{N}
    \\
    &\lesssim
    |s-t|+(s-t)^2+\frac{1}{N}\notag\\
    &\lesssim
    |s-t|+\frac{1}{N}.
\end{align*}

With $\|\cdot\|_2$ denoting the $L^2$-norm, we thus have
\begin{align*}
     \sqrt{N}\bigl\|\widetilde\Phi^{\circ}_N(s) - \widetilde\Phi^{\circ}_N(t)\bigr\|_2 
     \,\lesssim\, |s-t|^{1/2} + N^{-1/2} 
     \,\lesssim\, d(s,t),
\end{align*}
for all $s,t$, provided $2d(s,t) \ge \bar\eta \coloneqq N^{-1/2}$, with $d(s,t) \coloneqq |s-t|^{1/2}$. Hence, the conditions of \citet[][Lemma A.1]{KleyVolgushevDetteHallin2016} are met. With $\psi(x)=x^4$ and packing numbers $\mathcal D(\epsilon,d)\leq \lfloor \epsilon^{-2}\rfloor+1$ (the maximal number of points in $\mathbb{D}$ with pairwise distance exceeding $\epsilon$), it follows that for any $\delta>0$ and $\eta \geq \bar\eta$, there exists a random variable $S$ and a constant $K<\infty$ such that
\begin{align}\label{eq27}
    \sqrt{N}\!\sup_{d(s,t) \leq \delta}\,\bigl|\widetilde\Phi^{\circ}_N(s)-\widetilde\Phi^{\circ}_N(t)\bigr| \,\leq\, S + 2\sqrt{N}\!\sup_{d(s,t) \leq \bar\eta, t\in\tilde T}\,\bigl|\widetilde\Phi^{\circ}_N(s)-\widetilde\Phi^{\circ}_N(t)\bigr|\,,
\end{align}
where $\tilde T = \{t_1, \dots, t_q\}\subset \mathbb{D}$ denotes a set that contains at most $\mathcal D(\bar\eta, d)$ points, and
\begin{align*}
    \Prob(|S| > x) \leq\Bigg\{\psi\Bigg(x\bigg[8K\bigg(\int^\eta_{\bar\eta/2}\psi^{-1}(\mathcal D(\epsilon, d))\,\mathrm{d}\epsilon + (\delta +2\bar\eta)\psi^{-1}(\mathcal D^2(\eta, d))\bigg)\bigg]^{-1}\Bigg)\!\Bigg\}^{\!-1}\!, ~~ x>0.
\end{align*}
Thereby, due to 
\[
    \int^\eta_{\bar\eta/2}\psi^{-1}(\mathcal D(\epsilon, d))\,\mathrm{d}\epsilon
    \,\lesssim\, \int^\eta_0 \,\epsilon^{-1/2}\,\mathrm{d}\epsilon ~<~ \infty, \quad \eta \geq \bar\eta/2 > 0,
\]
and $\bar\eta = N^{-1/2}$ and $\psi(x) = x^4,$ we have
\begin{align}\label{eq28}
    \lim_{\delta \downarrow 0}\lim_{N\to\infty}\Prob(|S| > x) \leq \left(\frac{8K}{x}\int^\eta_0\,\epsilon^{-1/2} \,\mathrm{d}\epsilon\right)^{\!4}\!, \quad \eta, x >0.
\end{align}

\noindent Next, observe the second term in \eqref{eq27}. Since $\bar\eta = 1/N,$ the set $\tilde T = \{t_1, \dots, t_q\}$ in \eqref{eq27} contains up to $\mathcal{D}(\bar\eta, q) \leq N^2$ points. Therefore, as the process under consideration is a partial sum process and thus constant on the intervals $[\tfrac{\zeta-1}{N}, \tfrac{\zeta}{N})$ for $\lfloor N/2\rfloor < \zeta \leq N,$
\begin{align}
    \sup_{d(s,t) \leq \bar\eta, t\in\tilde T}\,\bigl|\widetilde\Phi^{\circ}_N(s) \!-\! \widetilde\Phi^{\circ}_N(t)\bigr| &= \max_{1\leq \zeta \leq q}\!\max\!\Big\{\bigl|\widetilde\Phi^{\circ}_N(t_\zeta \!+\! \tfrac 1 N)\! - \!\widetilde\Phi^{\circ}_N(t_\zeta)\bigr|\, , \bigl|\widetilde\Phi^{\circ}_N(t_\zeta)\! - \!\widetilde\Phi^{\circ}_N(t_\zeta \!-\! \tfrac 1 N)\bigr|\Big\}\notag\\
    &\leq \max_{\lfloor N/2\rfloor< \zeta \leq N}\,\bigl|\widetilde\Phi^{\circ}_N(\tfrac \zeta N) - \widetilde\Phi^{\circ}_N(\tfrac {\zeta-1} N)\bigr|\,.
\end{align}
Furthermore, \citep[cf.][p.~145]{vanderVaart1998},
\begin{align*}
    \Bigl\|\,\max_{\lfloor N/2\rfloor< \zeta \leq N}\,\bigl|\widetilde\Phi^{\circ}_N(\tfrac \zeta N) - \widetilde\Phi^{\circ}_N(\tfrac {\zeta-1} N)\bigr|\,\Bigr\|_4
    \leq
    N^{1/4}\!\max_{\lfloor N/2\rfloor< \zeta \leq N}
    \big\|\widetilde\Phi^{\circ}_N(\tfrac \zeta N) - \widetilde\Phi^{\circ}_N(\tfrac {\zeta-1} N)\big\|_4.
\end{align*}
Since each increment
\[
    \widetilde\Phi^{\circ}_N(\tfrac \zeta N)
    -
    \widetilde\Phi^{\circ}_N(\tfrac {\zeta-1} N)
\]
Since it is a polynomial of degree at most two in the Gaussian innovations, Gaussian hypercontractivity yields that its $L^4$-norm is bounded by a constant multiple of its $L^2$-norm, uniformly in $N$. Applying the second-moment bound above with $s=\zeta/N$ and $t=(\zeta-1)/N$ therefore gives
\[
    \max_{\lfloor N/2\rfloor< \zeta \leq N}
    \big\|\widetilde\Phi^{\circ}_N(\tfrac \zeta N) - \widetilde\Phi^{\circ}_N(\tfrac {\zeta-1} N)\big\|_4
    =
    O(N^{-1}).
\]
Consequently,
\begin{align*}
    \Big\|\sqrt{N}\!\sup_{d(s,t) \leq \bar\eta,\, t\in\tilde T}
    \bigl|\widetilde\Phi^{\circ}_N(s)-\widetilde\Phi^{\circ}_N(t)\bigr|\Big\|_4
    &\leq
    N^{3/4}\!\max_{\lfloor N/2\rfloor< \zeta \leq N}
    \big\|\widetilde\Phi^{\circ}_N(\tfrac \zeta N) - \widetilde\Phi^{\circ}_N(\tfrac {\zeta-1} N)\big\|_4
    \\
    &=O(N^{-1/4}).
\end{align*}

Finally, \eqref{eq27}--\eqref{eq28} and basic inequalities imply for any $\varepsilon > 0,$
\begin{align}
     &\lim_{\delta \downarrow 0}\lim_{N\to\infty}\Prob\Big(\sqrt{N}\!\sup_{d(s,t) \leq \delta}\,\bigl|\widetilde\Phi^{\circ}_N(s)-\widetilde\Phi^{\circ}_N(t)\bigr| > \varepsilon\Big)\notag\\
     &\leq\lim_{\delta \downarrow 0}\lim_{N\to\infty}\Prob(|S| > \varepsilon/2) \,+\, \lim_{\delta \downarrow 0}\lim_{N\to\infty}\Prob\Big(\sqrt{N}\!\sup_{d(s,t) \leq \bar\eta, t\in\tilde T}\,\bigl|\widetilde\Phi^{\circ}_N(s)-\widetilde\Phi^{\circ}_N(t)\bigr| > \varepsilon/4\Big)\notag\allowdisplaybreaks\\
     &\leq \lim_{\delta \downarrow 0}\lim_{N\to\infty}\Prob(|S| > \varepsilon/2) \,+\, \Big(\frac{4}{\varepsilon}\Big)^4\lim_{\delta \downarrow 0}\lim_{N\to\infty}\Big\|\sqrt{N}\!\sup_{d(s,t) \leq \bar\eta, t\in\tilde T}\,\bigl|\widetilde\Phi^{\circ}_N(s)-\widetilde\Phi^{\circ}_N(t)\bigr|\,\Big\|^4_4 \notag\\
     &= 0.\notag
\end{align}
Asymptotic tightness is thus established.

\paragraph{Step~6.} Finally, we establish the claimed pivotal weak limit by combining the preceding steps and applying self-normalization. We first summarize what has been established so far. Recall that $\boldsymbol{\mathcal{F}}_N = \{\mathcal{F}_N(s)\}_{s\in\mathbb D}$ and $   \widetilde{\boldsymbol{\mathcal F}}_N = \{\widetilde{\mathcal F}_N(s)\}_{s\in\mathbb D},$
where
\[
    \mathcal{F}_N(s)
    = \sqrt{N}\big(\Phi_N(s)-\Phi\big),
    \qquad
    \widetilde{\mathcal F}_N(s)
    = \sqrt{N}\big(\widetilde\Phi_N(s)-\Phi\big).
\]
Here, $\Phi$ denotes the integral in \eqref{eq1}, $\Phi_N(s)$ is the 
weighted integrated sequential periodogram defined in \eqref{eq4}, and 
$\widetilde\Phi_N(s)$ denotes its corresponding version obtained by 
replacing the sequential periodogram $I_N^s(\lambda)$ in \eqref{eq3} 
with $\widetilde I_N^s(\lambda)$ from \eqref{eq11}. Step~1 showed that the replacement of $I_N^s$ by $\widetilde I_N^s$ is
asymptotically negligible, that is,
\[
    \big\|
        \boldsymbol{\mathcal F}_N
        - \widetilde{\boldsymbol{\mathcal F}}_N
    \big\|_\infty
    = o_{\mathbb P}(1).
\]
Steps~2--5, in turn, established the weak convergence of the approximating
process $\widetilde{\boldsymbol{\mathcal F}}_N$ to the claimed Gaussian
limit. Combining these results and applying Slutsky's theorem therefore
yields
\[
    \boldsymbol{\mathcal F}_N
    =
    \widetilde{\boldsymbol{\mathcal F}}_N
    +
    \big(
        \boldsymbol{\mathcal F}_N
        - \widetilde{\boldsymbol{\mathcal F}}_N
    \big)
    \stackrel{d}{\longrightarrow}
    \tau_{w_f}
    \Big\{
        \frac{1}{s}\mathbb B(s)
    \Big\}_{s\in\mathbb D}\,,
\]
in $\ell^\infty(\mathbb D)$, where $\tau_{w_f}$ is given in \eqref{eq6}
and $\mathbb B=\{\mathbb B(s)\}_{s\in\mathbb D}$ is a Brownian motion.

This weak convergence provides the basis for the final self-normalization step. Since $w$ is non-zero (Lebesgue-)almost everywhere and $f\not\equiv 0$ by assumption, we have $\tau_{w_f}>0$. Applying the continuous mapping theorem to the map
\[
    \ell^\infty(\mathbb D)\ni x
    \mapsto
    \frac{x(1)}
    {
        \left(
            \displaystyle\int_{\mathbb D}
            \big(x(s)-x(1)\big)^2\,\mathrm{d}s
        \right)^{1/2}
    }\,,
\]
which is continuous whenever its denominator is strictly positive, yields

$$
    \frac{\Phi_N(1)-\Phi}
    {\left(\displaystyle\int_{\mathbb D}
    \big(\Phi_N(s)-\Phi_N(1)\big)^2\,\mathrm{d}s\right)^{\!1/2}}
    ~\stackrel{d}{\longrightarrow}~
    \frac{\tau_{w_f}\mathbb B(1)}
    {\tau_{w_f}\left(\displaystyle\int_{\mathbb D}
    \frac{1}{s^2}\big(\mathbb B(s)-s\mathbb B(1)\big)^2
    \,\mathrm{d}s\right)^{\!1/2}}\,.
$$
Notice that the denominator on the right-hand side is almost surely positive
\citep{QuanzDette2025}. Since $\tau_{w_f}>0$, it cancels, which establishes the claimed pivotal weak limit. \hfill\qed

\subsection{Proof of Proposition \ref{Prop1}}\label{Sec42}
We establish
\begin{align}
    \boldsymbol{\mathcal G}_N
    =
    \big\{\mathcal G_N(s)\big\}_{s\in\mathbb D}
    \coloneqq
    \big\{\sqrt{N}\Psi_N^{\circ}(s)\big\}_{s\in\mathbb D}
    \,\stackrel{d}{\longrightarrow}\,
    \boldsymbol{\mathcal G}
    =
    \big\{\mathcal G(s)\big\}_{s\in\mathbb D},
\end{align}
in $\ell^\infty(\mathbb D)$, where $\Psi_N^{\circ}$ is defined in
\eqref{eq14} and $\mathcal G(s)$ in \eqref{eq9}.
As in the proof of Theorem~\ref{Thm1} (Section~\ref{Sec41}), we work with
the more tractable process $\widetilde{\boldsymbol{\mathcal G}}_N$, obtained
from $\boldsymbol{\mathcal G}_N$ by replacing $\Psi_N$ with
$\widetilde{\Psi}_N$ in \eqref{eq14}. Since the remaining arguments proceed
analogously to those in the proof of Theorem~\ref{Thm1}, except that no
self-normalization is required, we focus on the parts that require a separate
derivation: the approximation (Step~1)
\begin{align}\label{eq29}
    \sup_{s\in\mathbb D}
    \big|\widetilde{\Psi}_N(s)-\Psi_N(s)\big|
    &=
    o_{\Prob}\big(N^{-1/2}\big),
\end{align}
and the asymptotic first- and second-order moment structure (Steps~2--3).

\paragraph{Step~1.} In the following, we show \eqref{eq29}. As in step 1 in Section \ref{Sec41}, let
$m\in\{\lfloor N/2\rfloor,\ldots,N\}$ and recall from
\eqref{eq19} that
\[
    I_m(\lambda)
    =
    \widetilde I_m(\lambda)+R_m(\lambda).
\]
Consequently,
\begin{align}\label{eq30}
    I_m(\lambda_j)I_m(\lambda_{j-1})
    -
    \widetilde I_m(\lambda_j)
    \widetilde I_m(\lambda_{j-1})
    =
    R_m(\lambda_j)\widetilde I_m(\lambda_{j-1})
    +
    \widetilde I_m(\lambda_j)R_m(\lambda_{j-1})
    +
    R_m(\lambda_j)R_m(\lambda_{j-1}).
\end{align}
We control the three terms on the right-hand side separately.

\bigskip

\noindent{\bf Step~1.1.} We first consider
\[
    \sum_{j=1}^{\lfloor N/2\rfloor}
    R_m(\lambda_j)\widetilde I_m(\lambda_{j-1}).
\]
Since
\[
    \widetilde I_m(\lambda)
    =
    |\Xi(\lambda)|^2
    |J_{m,\varepsilon}(\lambda)|^2,
\]
with $\Xi(\lambda)$ in \eqref{eq16}, and, by \eqref{eq19},
\[
    R_m(\lambda)
    =
    2\Re\!\left(
        \Xi(\lambda)J_{m,\varepsilon}(\lambda)
        \overline{Y_m(\lambda)}
    \right)
    +
    |Y_m(\lambda)|^2,
\]
it suffices first to control
\begin{align}\label{eq31}
    \sum_{j=1}^{\lfloor N/2\rfloor}
    \Xi(\lambda_j)J_{m,\varepsilon}(\lambda_j)
    \overline{Y_m(\lambda_j)}
    |\Xi(\lambda_{j-1})|^2
    |J_{m,\varepsilon}(\lambda_{j-1})|^2.
\end{align}

For this purpose, define
\begin{align}
    \eta_{N,k}
    \coloneqq
    \frac{1}{N}
    \sum_{j=1}^{\lfloor N/2\rfloor}
    \Xi(\lambda_j)
    |\Xi(\lambda_{j-1})|^2
    \e^{-\imag k\lambda_j},
    \qquad k\in\mathbb Z.
\end{align}
By construction, $\eta_{N,k}$ is $N$-periodic in $k$. Moreover, Parseval's identity and boundedness of $\Xi$ give
\begin{align}\label{eq32}
    \sum_{k=0}^{N-1}
    |\eta_{N,k}|^2
    &=
    \frac{1}{N}
    \sum_{j=1}^{\lfloor N/2\rfloor}
    |\Xi(\lambda_j)|^2
    |\Xi(\lambda_{j-1})|^4
    = O(1),
\end{align}
uniformly in $N$.

Recall from Step~1.1 that
\[
    U_{m,\ell}(\lambda)
    =
    \sum_{r\in B_{m,\ell}}
    c_{m,\ell,r}\varepsilon_r
    \e^{-\imag r\lambda},
    \qquad
    |B_{m,\ell}|
    \leq
    2\min(|\ell|,m),
    \qquad
    c_{m,\ell,r} \in \{-1,1\}.
\]
Expanding the terms in \eqref{eq31} and using
$\lambda_{j-1}=\lambda_j-2\pi/N$, yields
\begin{align}
    &\sum_{j=1}^{\lfloor N/2\rfloor}
    \Xi(\lambda_j)J_{m,\varepsilon}(\lambda_j)
    \overline{Y_m(\lambda_j)}
    |\Xi(\lambda_{j-1})|^2
    |J_{m,\varepsilon}(\lambda_{j-1})|^2
    \nonumber\\
    &\qquad=
    \frac{N}{(2\pi m)^2}
    \sum_{\ell=-\infty}^{\infty}
    \psi_\ell
    \sum_{r\in B_{m,\ell}}
    c_{m,\ell,r}\,
    \varepsilon_r
    \sum_{t,u,v=1}^{m}
    \varepsilon_t\varepsilon_u\varepsilon_v
    \e^{2\pi\imag(u-v)/N}
    \eta_{N,t-\ell-r+u-v}.
\end{align}
For fixed $m,\ell,$ and $r$, put
\begin{align*}
    Z_{m,\ell,r}
    \coloneqq
    \sum_{t,u,v=1}^{m}
    \varepsilon_t\varepsilon_u\varepsilon_v
    \e^{2\pi\imag(u-v)/N}
    \eta_{N,t-\ell-r+u-v}.
\end{align*}
Since $Z_{m,\ell,r}$ is a polynomial of degree at most three in the Gaussian innovations, Gaussian hypercontractivity yields, for every fixed $q\geq1$,
\begin{align*}
    \nu_{2q}(Z_{m,\ell,r})
    \,\lesssim\,
    \nu_2(Z_{m,\ell,r}),
\end{align*}
while, by Isserlis's theorem, \eqref{eq32}, and the
$N$-periodicity of $\eta_{N,r}$ in $r,$
\begin{align*}
    \nu_2(Z_{m,\ell,r})^2
    \,\lesssim\,
    m^2
    \sum_{k=0}^{N-1}
    |\eta_{N,k}|^2
    \,\lesssim\,
    m^2,
\end{align*}
uniformly in
$\lfloor N/2\rfloor\leq m\leq N$ and $\ell,r\in\mathbb Z,$ and therefore
\[
    \nu_{2q}(Z_{m,\ell,r})
    \,\lesssim\,
    m
\]
uniformly in $m$ and $\ell, r.$ Hence, by Minkowski's inequality, followed by H\"older's inequality, we obtain
\begin{align*}
    \sup_{\lfloor N/2\rfloor\leq m\leq N}\nu_q\Bigg(\sum_{j=1}^{\lfloor N/2\rfloor}
        \Xi(\lambda_j)J_{m,\varepsilon}(\lambda_j)
        \overline{Y_m(\lambda_j)}
        \widetilde I_m(\lambda_{j-1})
    \Bigg)
    &\lesssim
    \frac{N}{m^2}
    \sum_{\ell=-\infty}^{\infty}
    |\psi_\ell|
    \sum_{r\in B_{m,\ell}}
    \nu_{2q}(\varepsilon_r)
    \nu_{2q}(Z_{m,\ell,r})
    \\
    &\lesssim
    \frac{N}{m}
    \sum_{\ell=-\infty}^{\infty}
    \min(|\ell|,m)|\psi_\ell|
    \\
    &\lesssim
    N^{1-\alpha},
\end{align*}
where we used $m\asymp N,$ $\min(|\ell|,N) \leq N^{1-\alpha}|\ell|^\alpha,$ and the summability condition in \eqref{eq2}.

It remains to consider the quadratic part of $R_m$. By Step~1.1,
for every fixed $q\geq1$,
\[
    \sup_{\lambda\in[-\pi,\pi]}
    \sup_{\lfloor N/2\rfloor\leq m\leq N}
    \nu_{4q}\big(Y_m(\lambda)\big)
    =
    O(N^{-1/2}).
\]
Moreover, since
\[
    \widetilde I_m(\lambda)
    =
    |\Xi(\lambda)|^2
    |J_{m,\varepsilon}(\lambda)|^2,
\]
boundedness of $\Xi$ and Gaussianity of $J_{m,\varepsilon}$ imply
\[
    \sup_{\lambda\in[-\pi,\pi]}
    \sup_{\lfloor N/2\rfloor\leq m\leq N}
    \nu_{2q}\big(\widetilde I_m(\lambda)\big)
    = O(1),
\]
uniformly in $N.$ Thus, by Minkowski's and H\"older's inequalities,
\begin{align*}
    \sup_{\lfloor N/2\rfloor\leq m\leq N}
    \nu_q\Bigg(
        \sum_{j=1}^{\lfloor N/2\rfloor}
        |Y_m(\lambda_j)|^2
        \widetilde I_m(\lambda_{j-1})
    \Bigg)
    &\lesssim
    \sum_{j=1}^{\lfloor N/2\rfloor}
    \sup_{\lfloor N/2\rfloor\leq m\leq N}
    \nu_{2q}\big(|Y_m(\lambda_j)|^2\big)
    \nu_{2q}\big(\widetilde I_m(\lambda_{j-1})\big)\\
    &\lesssim
    \sum_{j=1}^{\lfloor N/2\rfloor}
    \sup_{\lfloor N/2\rfloor\leq m\leq N}
    \nu_{4q}\big(Y_m(\lambda_j)\big)^2
    =
    O(1).
\end{align*}
Combining the preceding bounds and using $|\Re(z)|\leq|z|$ gives
\begin{align}
    \sup_{\lfloor N/2\rfloor\leq m\leq N}
    \nu_q\Bigg(
        \sum_{j=1}^{\lfloor N/2\rfloor}
        R_m(\lambda_j)
        \widetilde I_m(\lambda_{j-1})
    \Bigg)
    =
    O(N^{1-\alpha}).
\end{align}

\paragraph{Step~1.2.} The second cross term in \eqref{eq30} is treated in the
same way up to a phase shift. Hence,
\begin{align}
    \sup_{\lfloor N/2\rfloor\leq m\leq N}
    \nu_q\Bigg(
        \sum_{j=1}^{\lfloor N/2\rfloor}
        \widetilde I_m(\lambda_j)
        R_m(\lambda_{j-1})
    \Bigg)
    =
    O(N^{1-\alpha}).
\end{align}

\paragraph{Step~1.3.} Finally, we consider the product of the two remainder terms in \eqref{eq30}. From
\eqref{eq19}, H\"older's inequality, the uniform moment bounds
for $J_{m,\varepsilon}$, and Step~1.1,
\begin{align*}
    \sup_{\lambda\in[-\pi,\pi]}
    \sup_{\lfloor N/2\rfloor\leq m\leq N}
    \nu_{2q}\big(R_m(\lambda)\big)
    =
    O(N^{-1/2}).
\end{align*}
Therefore, by Minkowski's and H\"older's inequalities,
\begin{align}\label{eq33}
    \sup_{\lfloor N/2\rfloor\leq m\leq N}
    \nu_q\Bigg(
        \sum_{j=1}^{\lfloor N/2\rfloor}
        R_m(\lambda_j)R_m(\lambda_{j-1})
    \Bigg)
    &=
    O(1).
\end{align}
Choose $q$ sufficiently large such that $1/q < \alpha - 1/2.$ Since there are at most $N$ possible values of $m$, it follows from
\eqref{eq30}--\eqref{eq33} that
\begin{align*}
    &\nu_q\Bigg(
        \max_{\lfloor N/2\rfloor\leq m\leq N}
        \Bigg|
        \sum_{j=1}^{\lfloor N/2\rfloor}
        \Big[
            I_m(\lambda_j)I_m(\lambda_{j-1})
            -
            \widetilde I_m(\lambda_j)
            \widetilde I_m(\lambda_{j-1})
        \Big]
        \Bigg|
    \Bigg)
    \\
    &\qquad\leq
    N^{1/q}
    \sup_{\lfloor N/2\rfloor\leq m\leq N}
    \nu_q\Bigg(
        \sum_{j=1}^{\lfloor N/2\rfloor}
        \Big[
            I_m(\lambda_j)I_m(\lambda_{j-1})
            -
            \widetilde I_m(\lambda_j)
            \widetilde I_m(\lambda_{j-1})
        \Big]
    \Bigg)
    \\
    &\qquad=
    O\big(N^{1-\alpha+1/q}\big)
    +
    O(N^{1/q})
    =
    o(N^{1/2}).
\end{align*}
Using Lyapunov's inequality and Markov's inequality, we conclude that
\begin{align}
    \max_{\lfloor N/2\rfloor\leq m\leq N}
    \Bigg|
        \sum_{j=1}^{\lfloor N/2\rfloor}
        \Big[
            I_m(\lambda_j)I_m(\lambda_{j-1})
            -
            \widetilde I_m(\lambda_j)
            \widetilde I_m(\lambda_{j-1})
        \Big]
    \Bigg|
    =
    o_{\Prob}(N^{1/2}).
\end{align}

Finally, let $N_s=\lfloor sN\rfloor$. By the definitions of
$\Psi_N$ and $\widetilde\Psi_N$, and since $N_s/(sN)\leq1$ for
$s\in\mathbb D=[1/2,1]$, the same argument as at the end of
Step~1.3 gives
\begin{align*}
    \sup_{s\in\mathbb D}
    \big|
        \Psi_N(s)-\widetilde\Psi_N(s)
    \big|
    &\leq
    \frac{4\pi}{N}
    \max_{\lfloor N/2\rfloor\leq m\leq N}
    \Bigg|
        \sum_{j=1}^{\lfloor N/2\rfloor}
        \Big[
            I_m(\lambda_j)I_m(\lambda_{j-1})
            -
            \widetilde I_m(\lambda_j)
            \widetilde I_m(\lambda_{j-1})
        \Big]
    \Bigg|
    \\
    &=
    o_{\Prob}(N^{-1/2}),
\end{align*}
which proves \eqref{eq29}.

\bigskip 

\paragraph{Step~2.} Here, we show that
\begin{align}\label{eq34}
    \sup_{s\in\mathbb D}\big|\Exp(\widetilde\Psi^{\circ}_N(s))\big| = o(N^{-1/2}),
\end{align}
and recall that $\Psi(s) = (1 + \frac{\sin^2(\pi s)}{\pi^2s^2})\Psi.$ The proof utilizes several arguments from Steps 2--3 in Section \ref{Sec41}. As $(\varepsilon_k)$ is a Gaussian white noise with unit variance, Isserlis's theorem yields
$\Exp(\varepsilon_m\varepsilon_n\varepsilon_o\varepsilon_p) = \delta_{mn}\delta_{op} + \delta_{mo}\delta_{np} + \delta_{mp}\delta_{no},$ leading to
\begin{align*}
    \Exp\!\big(I^s_{N,\varepsilon}(\lambda_j)I^s_{N,\varepsilon}(\lambda_{j-1})\big)
    &= \frac{1}{4\pi^2 s^2N^2}\sum_{m,n,o,p=1}^{N_s}\Exp(\varepsilon_m\varepsilon_n\varepsilon_o\varepsilon_p)\e^{-\imag[(m-n)\lambda_j + (o-p)\lambda_{j-1}]}\\
    &= \frac{1}{4\pi^2 s^2N^2}\bigg(N^2_s + \sum_{m,n=1}^{N_s}\Big[\e^{-\imag(m-n)\lambda_{2j-1}} + \e^{-\imag(m-n)\lambda_1}\Big] - 2N_s\bigg)\allowdisplaybreaks\\
    &= \frac{N_s}{4\pi^2 s^2N^2}\Big(N_s + 
    F_{N_s}(\lambda_{2j-1}) + F_{N_s}(\lambda_1)  - 2
    \Big)\\
    &=
    \frac{1}{4\pi^2}\Bigg(\,1 + 
    \frac{N_s}{s^2N^2}
    \Big(
    F_{N_s}(\lambda_{2j-1})
    +F_{N_s}(\lambda_1)
    \Big)
    + \underbrace{\Big(\frac{N_s^2}{s^2N^2}
    -1\Big)
    -\frac{2N_s}{s^2N^2}\,}_{=\,O(N^{-1})\,}\;
    \Bigg),
\end{align*}
where the $O(N^{-1})$ term is uniform in $j$ and $s.$ Subsequently, since $f$ is symmetric,
\begin{align*}
    \Exp\!\big(\widetilde\Psi^{\circ}_N(s)\big)
    &= \frac{16\pi^3}{N}\sum_{j=1}^{\lfloor N/2\rfloor}
    f(\lambda_j)f(\lambda_{j-1})
    \Exp\!\big(I^s_{N,\varepsilon}(\lambda_j)I^s_{N,\varepsilon}(\lambda_{j-1})\big)
    - \bigg(1 + \frac{\sin^2(\pi s)}{\pi^2s^2}\bigg)\int^\pi_{-\pi}f^2(\lambda)\,\mathrm{d}\lambda
    \notag\\
    &= \frac{16\pi^3}{N}\sum_{j=1}^{\lfloor N/2\rfloor}
    f^2(\lambda_j)
    \Exp\!\big(I^s_{N,\varepsilon}(\lambda_j)I^s_{N,\varepsilon}(\lambda_{j-1})\big)
    - \bigg(1 + \frac{\sin^2(\pi s)}{\pi^2s^2}\bigg)\int^\pi_{-\pi}f^2(\lambda)\,\mathrm{d}\lambda
    +O(N^{-\alpha})
    \notag\\
    &= 2\Bigg(
    \frac{2\pi}{N}\sum_{j=1}^{\lfloor N/2\rfloor}
    f^2(\lambda_j)
    - \int_0^\pi f^2(\lambda)\,\mathrm{d}\lambda
    \Bigg)\notag\\
    &\qquad\quad\! + 2\Bigg[\frac{2\pi N_s}{s^2N^3}\sum_{j=1}^{\lfloor N/2\rfloor}f^2(\lambda_j)
    \Big(
    F_{N_s}(\lambda_{2j-1})
    +F_{N_s}(\lambda_1)
    \Big) - \frac{\sin^2(\pi s)}{\pi^2s^2}\int^\pi_0f^2(\lambda)\,\mathrm{d}\lambda\Bigg]
    +O(N^{-1})\,,
\end{align*}
uniformly in $s.$ For the first term, since $f$ is H\"older continuous of order $\alpha,$ which transfers to $f^2,$ and since $\lambda_j - \lambda_{j-1} = 2\pi/N,$ we obtain (cf.\ Step 2 in Section \ref{Sec41})
\begin{align*}
    \frac{2\pi}{N}\sum_{j=1}^{\lfloor N/2\rfloor}
    f^2(\lambda_j)
    - \int_0^\pi f^2(\lambda)\,\mathrm{d}\lambda
    &=
    \sum_{j=1}^{\lfloor N/2\rfloor}\int^{\lambda_j}_{\lambda_{j-1}}
    \big[f^2(\lambda_j)
    -  f^2(\lambda)\big]\,\mathrm{d}\lambda\notag\\
    &= O(N^{-\alpha})\,.
\end{align*}
Next, for the second summand, boundedness of $f$, Lemma \ref{Lem2}~(a), and $N_s\asymp N$ yield
\begin{align*}
\sum_{j=1}^{\lfloor N/2\rfloor}
f^2(\lambda_j)F_{N_s}(\lambda_{2j-1})
&\lesssim
\frac{N^2}{N_s}
\sum_{j=1}^{\lfloor N/2\rfloor}
\frac{1}{\min(2j-1,N-(2j-1))^2}
=
O(N),
\end{align*}
uniformly in $s$. Since this term is multiplied by $N_s/N^3$, its contribution is $O(N^{-1})$. Finally, since
\[
    N_sF_{N_s}(\lambda_1)
    =\frac{\sin^2(N_s\lambda_1/2)}{\sin^2(\lambda_1/2)}
    \sim \frac{N^2}{\pi^2}\sin^2(\pi s),
    \qquad s\in\mathbb D=[1/2,1],
\]
the symmetry and H\"older continuity of $f$ of order $\alpha,$ and hence of $f^2,$ yield, analogously to the above,
\begin{align*}
    &\frac{4\pi N_s}{s^2N^3}
    \sum_{j=1}^{\lfloor N/2\rfloor}f^2(\lambda_j)F_{N_s}(\lambda_1)
    -\frac{\sin^2(\pi s)}{\pi^2s^2}
    \int_{-\pi}^{\pi}f^2(\lambda)\,\mathrm{d}\lambda\\
    & =
    2\Bigg(
    \frac{\sin^2(N_s\lambda_1/2)}{\sin^2(\lambda_1/2)}
    \frac{4\pi}{s^2N^3}
    \sum_{j=1}^{\lfloor N/2\rfloor}f^2(\lambda_j)
    -\frac{\sin^2(\pi s)}{\pi^2s^2}
    \int_0^\pi f^2(\lambda)\,\mathrm{d}\lambda
    \Bigg)\\
    &=O(N^{-\alpha})\,,
\end{align*}
uniformly in $s.$ Altogether, since $\alpha \in (1/2,1],$ \eqref{eq34} follows.

\paragraph{Step~3.} First, for any $h,j,s,t$,
Isserlis's theorem yields
\begin{align}
    &\Cov\!\big(
    I^s_{N,\varepsilon}(\lambda_h)I^s_{N,\varepsilon}(\lambda_{h-1}),
    I^t_{N,\varepsilon}(\lambda_j)I^t_{N,\varepsilon}(\lambda_{j-1})
    \big)\notag\\
    &= \frac{1}{s^2t^2N^4}
    \sum_{m,n,o,p=1}^{N_s}\sum_{q,r,u,v=1}^{N_t}
    \sum_{\pi}\prod_{(a,b)\in\pi}\delta_{ab}\,
    \e^{-\imag[(m-n)\lambda_h+(o-p)\lambda_{h\!-\!1}
    +(q-r)\lambda_j+(u-v)\lambda_{j-1}]},
    \label{eq35}
\end{align}
where $\pi$ runs over pair partitions of $\{m,n,o,p,q,r,u,v\}$
with at least one cross-pair between $\{m,n,o,p\}$ and
$\{q,r,u,v\}$. There are
$(8-1)!!-(4-1)!!^2=96$ such partitions, which we classify into four types. Note that the (non-sequential) periodogram equals the product of the DFT and its conjugate. For Gaussian white noise innovations, the Leonov--Shiryaev formula for cumulants of products \citep{LeonovShiryaev1959}; see also \citet[][Section 2.3]{Brillinger2001}, then yields the result relatively directly. In the sequential setting considered here, however, the sums do not extend over all Fourier frequencies $\lambda_j=2\pi j/N$, $j\in\{0,\ldots,N-1\}$, except when $s=1$, making the calculations more tedious.

First, $8$ partitions satisfy $m=n$ or $o=p$ and, simultaneously,
$q=r$ or $u=v$. They yield Fejér kernels with factors of order $N^3$.
Second, $24$ partitions contain only cross-pairs and yield products of
four upper Dirichlet kernels whose arguments depend on both $h$ and $j$.
For example, $m=q$, $n=r$, $o=u$, and $p=v$ gives a four-fold product of upper Dirichlet kernels $D^+_K$ (see the beginning of this section). Third, $32$ partitions have an internal pairing in exactly one of the
two sets. They yield products of three upper Dirichlet kernels with
factors of order $N$, with at least one argument depending only on $h$
or $j$. Their total contribution is $o(N^5)$. Finally, the
remaining $32$ partitions contain one internal pairing in each set and
yield products of four upper Dirichlet kernels, which are likewise
asymptotically negligible. For instance, for the internal pairs $m=o$
and $q=u$, together with $n=r$ and $p=v$, H\"older's inequality, Lemma \ref{Lem1}, and the arguments in Section \ref{Sec41} give
\begin{align*}
    &\sum_{h,j=1}^{\lfloor N/2\rfloor}
    \big|D^+_{N_{st}}(\lambda_{2h-1})D^+_{N_{st}}(\lambda_{2j-1})D^+_{N_{st}}(\lambda_{-h-j})D^+_{N_{st}}(\lambda_{2-h-j})
    \big|\\
    &\leq
    \bigg(
    \sum_{h,j=1}^{\lfloor N/2\rfloor}
    \big|D^+_{N_{st}}(\lambda_{2h-1})D^+_{N_{st}}(\lambda_{2j-1})\big|^2
    \bigg)^{1/2}
    \bigg(
    \sum_{h,j=1}^{\lfloor N/2\rfloor}
    \big|D^+_{N_{st}}(\lambda_{-h-j})\big|^4
    \bigg)^{1/4}
    \bigg(
    \sum_{h,j=1}^{\lfloor N/2\rfloor}
    \big|D^+_{N_{st}}(\lambda_{2-h-j})\big|^4
    \bigg)^{1/4}\\
    &= o(N^5)\,.
\end{align*}

Hence, the quantity in \eqref{eq35} asymptotically coincides with $32$ terms (grouped where possible), corresponding to the contributions from the $8$ Fejér-type partitions in the first case and the $24$ fully cross-linked partitions described in the second case and contained in $D^{sstt}_N(h,j)$. More precisely,
\begin{align}
    \Cov\!\Big(
    I^s_{N,\varepsilon}(\lambda_h)I^s_{N,\varepsilon}(\lambda_{h-1}),
    I^t_{N,\varepsilon}(\lambda_j)I^t_{N,\varepsilon}(\lambda_{j-1})
    \Big)
    &\sim \frac{1}{s^2t^2N^4}
    \Big[N_sN_tN_{st}\,S_{N_{st}}(h,j)+D^{sstt}_N(h,j)\Big],
    \label{eq36}
\end{align}
where $S_{N_{st}}(h,j)$ denotes the following linear combination of single Fejér kernels:
\begin{align}\label{eq37}
\begin{split}
    S_{N_{st}}(h,j)
    &\coloneqq
    F_{N_{st}}(\lambda_{h-j-1})
    +2F_{N_{st}}(\lambda_{h-j})
    +F_{N_{st}}(\lambda_{h-j+1})\\[0.5ex]
    &\quad\;\,
    +F_{N_{st}}(\lambda_{h+j-2})
    +2F_{N_{st}}(\lambda_{h+j-1})
    +F_{N_{st}}(\lambda_{h+j})\,.
\end{split}
\end{align}
Moreover, $D^{sstt}_N(h,j)$, which, due to
$D^+_{N_{st}}(x)D^+_{N_{st}}(-x)=N_{st}F_{N_{st}}(x)$, can be decomposed as
\begin{align}
D^{sstt}_N(h,j)
&=N_{st}^2\Big(
T_{N_{st},1}(h,j)
+T_{N_{st},2}(h,j)
+T_{N_{st},3}(h,j)
+T_{N_{st},4}(h,j)
\Big)
+R^{sstt}_N(h,j),
\label{eq38}
\end{align}
where the individual terms are obtained by grouping the corresponding products in the explicit representation above. In particular,
\begin{align}
    T_{N_{st},1}(h,j)
    &\coloneqq F^2_{N_{st}}(\lambda_{h-j}),
    \label{eq39}\\
    T_{N_{st},2}(h,j)
    &\coloneqq
    F_{N_{st}}(\lambda_{h-j-1})
    \big[
        F_{N_{st}}(\lambda_{h-j})
        +
        F_{N_{st}}(\lambda_{h-j+1})
    \big],
    \label{eq40}\\
    T_{N_{st},3}(h,j)
    &\coloneqq
    F_{N_{st}}(\lambda_{h-j})
    \big[
        F_{N_{st}}(\lambda_{h+j-2})
        +
        F_{N_{st}}(\lambda_{h+j-1})
        +
        2F_{N_{st}}(\lambda_{h+j})
    \big],
    \label{eq41}\\
    T_{N_{st},4}(h,j)
    &\coloneqq
    F_{N_{st}}(\lambda_{h+j})
    \big[
        F_{N_{st}}(\lambda_{h-j+1})
        +
        F_{N_{st}}(\lambda_{h+j-2})
        +
        F_{N_{st}}(\lambda_{h+j-1})
    \big].
    \label{eq42}
\end{align}
The term $R^{sstt}_N(h,j)$ contains the remaining products of four upper Dirichlet kernels.

For the limiting covariances of the quantities $\widetilde \Psi_N$ in Section \ref{Sec42}, with $N_s=\lfloor sN\rfloor$ and $N_{st}=\lfloor\min(s,t)N\rfloor$, the arguments above, together with \eqref{eq36}, yield with $f_{[1]}(\lambda_h)=f(\lambda_h)f(\lambda_{h-1}),$
\begin{align}
    N\Cov\!\big(\widetilde \Psi^{\circ}_N(s),\widetilde \Psi^{\circ}_N(t)\big)
    &=\frac{16\pi^2}{N}
    \sum_{h,j=1}^{\lfloor N/2\rfloor}
    f_{[1]}(\lambda_h)f_{[1]}(\lambda_j)\Cov\!\big(
    I^s_{N,\varepsilon}(\lambda_h)I^s_{N,\varepsilon}(\lambda_{h-1}),
    I^t_{N,\varepsilon}(\lambda_j)I^t_{N,\varepsilon}(\lambda_{j-1})
    \big)\notag\allowdisplaybreaks\\
    &\sim \frac{4\pi^2}{\max(s,t)N^2}
    \sum_{h,j=0}^{N-1}
    f_{[1]}(\lambda_h)f_{[1]}(\lambda_j)
    S_{N_{st}}(h,j)
    \label{eq43}\\
    &\quad\;\,
    +\frac{16\pi^2}{s^2t^2N^5}
    \sum_{h,j=1}^{\lfloor N/2\rfloor}
    f_{[1]}(\lambda_h)f_{[1]}(\lambda_j)
    D^{sstt}_N(h,j),
    \label{eq44}
\end{align}
\noindent with $D^{sstt}_N(h,j)$ from \eqref{eq38}. By the definition of $S_{N_{st}}(h,j)$ in \eqref{eq37} and the arguments of Section \ref{Sec41}, the term in \eqref{eq43} satisfies
\begin{align}
    \frac{4\pi^2}{\max(s,t)N^2}
    \sum_{h,j=0}^{N-1}
    f_{[1]}(\lambda_h)f_{[1]}(\lambda_j)
    S_{N_{st}}(h,j)
    \,\stackrel{N\to\infty}{\longrightarrow}\,
    \frac{16\pi}{\max(s,t)}
    \int^\pi_{-\pi}f^4(\lambda)\,\mathrm{d}\lambda.
    \label{eq45}
\end{align}

Next, we determine the limit of the term in \eqref{eq44}. To this end, we first establish the following preparatory approximation result.

\bigskip

\noindent{\bf Step 3.1.} For $m=1,2$, define
$$
    \widetilde{\sigma}_{N_{st},m}(g,\lambda_h)
    \coloneqq
    \frac{1}{N}\sum_{j=0}^{N-1}
    g(\lambda_j)\,
    \widetilde T_{N_{st},m}(h,j),
    \qquad
    \widetilde T_{N_{st},m}(h,j)
    \coloneqq
    \frac{T_{N_{st},m}(h,j)}
    {\displaystyle\frac{1}{N}\sum_{k=0}^{N-1}T_{N_{st},m}(k,0)}.
$$
We show that for $f_{[1]},$ with $f_{[1]}(\lambda_j) = f(\lambda_j)f(\lambda_{j-1}),$ it holds that 
\begin{equation}
    \max_{0\leq h<N}
    \big|
    \widetilde{\sigma}_{N_{st},m}(f_{[1]},\lambda_h)
    -f_{[1]}(\lambda_h)
    \big|
    =o(1),
\qquad m=1,2.
\label{eq46}
\end{equation}

By the definition of $T_{N_{st},m},$ it holds that
$$
    \sum_{j=0}^{N-1}
    T_{N_{st},m}(h,j)
    \,=\,
    \sum_{k=0}^{N-1}
    T_{N_{st},m}(k,0),  \qquad m=1,2.
$$
Consequently,
$$
    \frac{1}{N}\sum_{j=0}^{N-1}
    \widetilde T_{N_{st},m}(h,j)=1,  \qquad m=1,2,
$$
and therefore
\begin{equation}
    \widetilde{\sigma}_{N_{st},m}(g,\lambda_h)
    -g(\lambda_h)
    =
    \frac{1}{N}\sum_{j=0}^{N-1}
    \bigl(g(\lambda_j)-g(\lambda_h)\bigr)
    \widetilde T_{N_{st},m}(h,j), \qquad m=1,2.
\end{equation}

We first consider $m=1$. By the normalization of
$\widetilde T_{N_{st},1}$ and using ideas in the
proof of Lemma~\ref{Lem3},
\begin{align*}
    \big|
        \widetilde{\sigma}_{N_{st},1}(f_{[1]},\lambda_h)
        -f_{[1]}(\lambda_h)
    \big|
    &\lesssim
    \frac{
        \displaystyle
        \frac{1}{N}
        \sum_{0<|k|\leq \lfloor N/2\rfloor}
        |\lambda_k|^\alpha
        F_{N_{st}}^2(\lambda_k)
    }{
        \displaystyle
        \frac{1}{N}
        \sum_{k=0}^{N-1}
        F_{N_{st}}^2(\lambda_k)
    }.
\end{align*}
By Lemma~\ref{Lem2}~(a),
\[
    F_{N_{st}}(\lambda_k)
    \lesssim
    \frac{N^2}{N_{st}k^2},
    \qquad 0<|k|\leq \lfloor N/2\rfloor,
\]
whereas, using $F_{N_{st}}(0)=N_{st}$,
\[
    \frac{1}{N}\sum_{k=0}^{N-1}
    F_{N_{st}}^2(\lambda_k)
    \geq
    \frac{N_{st}^2}{N}.
\]
Therefore, since $f$ is H\"older continuous of order $\alpha \in (1/2,1],$ the function $f_{[1]}$ is too, leading to 
\begin{align*}
    \big|
        \widetilde{\sigma}_{N_{st},1}(f_{[1]},\lambda_h)
        -f_{[1]}(\lambda_h)
    \big|
    \,\lesssim\,
    \frac{N^{4-\alpha}}{N_{st}^4}
    \sum_{k=1}^{N} k^{\alpha-4}
    \,\lesssim\,
    \frac{N^{4-\alpha}}{N_{st}^4}
    \,=\, o(1),
\end{align*}
uniformly in $h$, since $N_{st}\asymp N$. This proves
\eqref{eq46} for $m=1$.

Next, consider $m=2.$ By the definition of $T_{N_{st},2}(h,j)$ in \eqref{eq40} and $2|ab|\leq a^2+b^2$, we have
$$
    |T_{N_{st},2}(h,j)|
    \leq
    F_{N_{st}}^2(\lambda_{h-j-1})
    +\frac{1}{2}F_{N_{st}}^2(\lambda_{h-j})
    +\frac{1}{2}F_{N_{st}}^2(\lambda_{h-j+1}).
$$
Thus, $T_{N_{st},2}$ is dominated by a finite sum of squared Fejér kernels whose arguments differ from $\lambda_{h-j}$ by at most $2\pi/N.$ Hence, the same arguments as for $m=1$ yield \eqref{eq46} for $m=2$.

\bigskip

\noindent{\bf Step 3.2.} Here, we consider the quantity $T_{N_{st},1}(h,j)=F^2_{N_{st}}(\lambda_{h-j})$ in \eqref{eq39}. As $\sum_{k=0}^{N-1}\e^{-\imag k\lambda}=N$ if $\lambda\equiv0 \bmod{2\pi}$ and zero otherwise, elementary manipulations and standard formulas for finite sums yield
\begin{align}
    \frac{1}{N}\sum_{k=0}^{N-1}T_{N_{st},1}(k,0)
    &=\frac{1}{N}\!\sum_{|\ell|,|m|<N_{st}}
    \!\Big(1-\frac{|\ell|}{N_{st}}\Big)
    \Big(1-\frac{|m|}{N_{st}}\Big)
    \sum_{k=0}^{N-1}\e^{-\imag(\ell+m)\lambda_k}
    \notag\\[0.5ex]
    &=\sum_{\substack{|\ell|,|m|<N_{st}\\
    \ell+m\in\{-N,0,N\}}}
    \!\Big(1-\frac{|\ell|}{N_{st}}\Big)
    \Big(1-\frac{|m|}{N_{st}}\Big)
    \notag\allowdisplaybreaks\\[0.5ex]
    &=\sum_{\ell=1-N_{st}}^{N_{st}-N-1}
    \Big(1-\frac{|\ell|}{N_{st}}\Big)
    \Big(1-\frac{|N+\ell|}{N_{st}}\Big)
    +\sum_{|\ell|<N_{st}}
    \!\Big(1-\frac{|\ell|}{N_{st}}\Big)^2
    \notag\allowdisplaybreaks\\[1ex]
    &=\frac{2N_{st}^2+1}{3N_{st}}
    +\frac{(2N_{st}-N-1)(2N_{st}-N)(2N_{st}-N+1)}
    {3N_{st}^2}
    \notag\\[1ex]
    &\sim
    \frac{2\min(s,t)N}{3}
    +\frac{(2\min(s,t)-1)^3N}{3\min^2(s,t)}\,.
\end{align}
Here, for the latter sum, we used that $2N_{st}>N+1$ for all sufficiently large $N$ whenever $\min(s,t)>1/2$. The boundary case $\min(s,t)=1/2$ does not alter the resulting formula, as the corresponding additional term vanishes. In fact, this calculation motivates the restriction to $\mathbb{D}=[1/2,1]$\label{eq47}. Further, for even $N$ (the case of odd $N$ can be shown similarly), by symmetry, $2\pi$-periodicity, and non-negativity of the Fejér kernel, Lemma \ref{Lem2}(a) gives, for $T_{N_{st},1}(h,j)=F^2_{N_{st}}(\lambda_{h-j})$,
\begin{align*}
    \sum_{h=\lfloor N/2\rfloor+1}^{N}
    \sum_{j=1}^{\lfloor N/2\rfloor}
    T_{N_{st},1}(h,j)
    &=
    2\sum_{k=1}^{N/2-1}kF^2_{N_{st}}(\lambda_k)
    +\frac{N}{2}F^2_{N_{st}}(\lambda_{N/2})
    =O(N^2)\,.
\end{align*}
Thus, by symmetry, $2\pi$-periodicity, and boundedness of $f$, we obtain, similarly as above,
\begin{align}
    &\frac{16\pi^2N_{st}^2}{s^2t^2N^5}
    \sum_{h,j=1}^{\lfloor N/2\rfloor}
    f_{[1]}(\lambda_h)f_{[1]}(\lambda_j)
    T_{N_{st},1}(h,j)
    \notag\\
    &\sim
    \frac{8\pi^2}{3\max^2(s,t)N}
    \bigg[
    2\min(s,t)
    +\frac{(2\min(s,t)-1)^3}{\min^2(s,t)}
    \bigg]
    \sum_{h=0}^{N-1}
    f_{[1]}(\lambda_h)
    \widetilde{\sigma}_{N_{st},1}
    (f_{[1]},\lambda_h)
    \notag\allowdisplaybreaks\\
    &\!\stackrel{N\to\infty}{\longrightarrow}\,
    \frac{4\pi k_1(\min(s,t))}
    {3\max^2(s,t)}
    \int^\pi_{-\pi}f^4(\lambda)\,\mathrm{d}\lambda,
    \label{eq48}
\end{align}
with $k_1\colon\mathbb{D}\to\R$ defined by
\begin{align}
    k_1(s)
    \coloneqq
    2s+\frac{(2s-1)^3}{s^2},
    \qquad s\in\mathbb{D}.
    \label{eq49}
\end{align}

\medskip

\noindent{\bf Step 3.3.} Next, consider the quantity $T_{N_{st},2}$ in \eqref{eq40}. By arguments similar to those above, together with Euler's formula, elementary trigonometric identities, and the fact that $s,t\in\mathbb{D}\subset[1/2,1]$, we obtain
\begin{align}
    &\frac{1}{N}\sum_{k=0}^{N-1}
    F_{N_{st}}(\lambda_{k-1})
    \big[
    F_{N_{st}}(\lambda_k)
    +
    F_{N_{st}}(\lambda_{k+1})
    \big]
    \notag\\[0.5ex]
    &=
    \sum_{\substack{|\ell|,|m|<N_{st}\\
    \ell+m\in\{-N,0,N\}}}
    \Big(1-\frac{|\ell|}{N_{st}}\Big)
    \Big(1-\frac{|m|}{N_{st}}\Big)
    \Big[1+\e^{-\imag\lambda_m}\Big]
    \e^{\imag\lambda_\ell}
    \notag\allowdisplaybreaks\\[0.5ex]
    \begin{split}
    &=
    2+\frac{2}{N_{st}^2}
    \bigg(
    (2N_{st}-N)
    \sum_{\ell=1}^{2N_{st}-N-1}
    \ell
    \big[
    \cos(\lambda_{N_{st}-\ell})
    +
    \cos(2\lambda_{N_{st}-\ell})
    \big]\\[0.5ex]
    &\qquad\qquad\qquad\quad
    +
    \sum_{\ell=2N_{st}-N}^{N_{st}-1}
    \ell^2
    \big[
    \cos(\lambda_{N_{st}-\ell})
    +
    \cos(2\lambda_{N_{st}-\ell})
    \big]
    \bigg).
    \end{split}
    \label{eq50}
\end{align}
For the first sum in \eqref{eq50}, we have
\begin{align}
    \sum_{\ell=1}^{2N_{st}-N-1}\!\!
    \ell
    \big[
    \cos(\lambda_{N_{st}-\ell})
    +
    \cos(2\lambda_{N_{st}-\ell})
    \big]
    \sim
    \frac{N^2(2\min(s,t)-1)}{4\pi}
    \Big[
    2\sin(2\pi\min(s,t))
    +
    \sin(4\pi\min(s,t))
    \Big].
\end{align}
For the second sum in \eqref{eq50}, we obtain
\begin{align*}
    &\sum_{\ell=2N_{st}-N}^{N_{st}-1}
    \ell^2
    \big[
    \cos(\lambda_{N_{st}-\ell})
    +
    \cos(2\lambda_{N_{st}-\ell})
    \big]
    \notag\\[1ex]
    &\sim
    \frac{N^3}{8\pi^3}
    \int_{2\pi(2\min(s,t)-1)}^{2\pi\min(s,t)}
    x^2
    \big[
    \cos(2\pi\min(s,t)-x)
    +
    \cos\big(2(2\pi\min(s,t)-x)\big)
    \big]
    \,\mathrm{d}x
    \notag\allowdisplaybreaks\\[1ex]
    &=
    \frac{N^3}{32\pi^3}
    \bigg[
    20\pi\min(s,t)
    -16\pi(2\min(s,t)-1)
    \cos(2\pi\min(s,t))
    \notag\\
    &\qquad\qquad\;
    -4\pi(2\min(s,t)-1)
    \cos(4\pi\min(s,t))
    -16\pi^2(2\min(s,t)-1)^2
    \sin(2\pi\min(s,t))
    \notag\\
    &\qquad\qquad\;
    -8\pi^2(2\min(s,t)-1)^2
    \sin(4\pi\min(s,t))
    +8\sin(2\pi\min(s,t))
    +
    \sin(4\pi\min(s,t))
    \bigg].
\end{align*}
Combining the two asymptotic expressions, the terms involving $(2\min(s,t)-1)^2\sin(2\pi\min(s,t))$ and $(2\min(s,t)-1)^2\sin(4\pi\min(s,t))$ cancel. Hence,
\begin{align}
    &\frac{1}{N}\sum_{k=0}^{N-1}
    F_{N_{st}}(\lambda_{k-1})
    \big[
    F_{N_{st}}(\lambda_k)
    +
    F_{N_{st}}(\lambda_{k+1})
    \big]
    \sim
    \frac{N\,k_2(\min(s,t))}
    {16\pi^3\min^2(s,t)},
\end{align}
where $k_2\colon\mathbb D=[1/2,1]\to\mathbb{R}$ is defined by
\begin{align}
    k_2(s)
    \coloneqq
    20\pi s
    -4\pi(2s-1)
    \big[
    \cos(4\pi s)
    +
    4\cos(2\pi s)
    \big]
    +
    8\sin(2\pi s)
    +
    \sin(4\pi s).
    \label{eq51}
\end{align}
This function is non-negative, continuous, and $k_2(1)=0$. Finally, we obtain for $T_{N_{st},2}(h,j)$ in \eqref{eq40},
\begin{align}
    \frac{16\pi^2N_{st}^2}{s^2t^2N^5}
    \sum_{h,j=1}^{\lfloor N/2\rfloor}
    f_{[1]}(\lambda_h)f_{[1]}(\lambda_j)
    T_{N_{st},2}(h,j)
    &\sim
    \frac{k_2(\min(s,t))}
    {2\pi\min^2(s,t)\max^2(s,t)N}
    \sum_{h=0}^{N-1}
    f_{[1]}(\lambda_h)
    \widetilde{\sigma}_{N_{st},2}
    (f_{[1]},\lambda_h)
    \notag\allowdisplaybreaks\\[1ex]
    &\!\stackrel{N\to\infty}{\longrightarrow}\,
    \frac{k_2(\min(s,t))}
    {4\pi^2\min^2(s,t)\max^2(s,t)}
    \int_{-\pi}^{\pi}f^4(\lambda)\,\mathrm{d}\lambda.
    \label{eq52}
\end{align}

\medskip

\noindent{\bf Step 3.4.}  Next, we consider the quantities in $T_{N_{st},3}(h,j)$ and $T_{N_{st},4}(h,j)$ in \eqref{eq41} and \eqref{eq42}, respectively, which have a similar structure. Up to phase shifts, the corresponding terms in $N_{st}^2T_{N_{st},3}(h,j)$ and $N_{st}^2T_{N_{st},4}(h,j)$ are of the type $N_{st}^2F_{N_{st}}(\lambda_{h-j})F_{N_{st}}(\lambda_{h+j})$ or $N_{st}^2F^2_{N_{st}}(\lambda_{h+j})$. Using the properties of the Fejér kernel together with Lemma \ref{Lem2} yields
\begin{align}
    &N_{st}^2
    \sum_{h,j=1}^{\lfloor N/2\rfloor}
    F_{N_{st}}(\lambda_{h-j})
    F_{N_{st}}(\lambda_{h+j})
    \,\leq\,
    NN_{st}^3
    +
    N_{st}^2
    \sum_{\substack{h,j=1\\h\neq j}}^{\lfloor N/2\rfloor}
    F_{N_{st}}(\lambda_{h-j})
    F_{N_{st}}(\lambda_{h+j})
    \,=\, O(N^4).
\end{align}
Similarly,
\begin{align}
    N_{st}^2
    \sum_{h,j=1}^{\lfloor N/2\rfloor}
    F^2_{N_{st}}(\lambda_{h+j})
    \,\leq\,
    N^4
    \sum_{h,j=1}^{\lfloor N/2\rfloor}
    \frac{1}{(h+j)^4}
    =O(N^4).
\end{align}
The same bounds hold in the presence of the additional phase shifts occurring in \eqref{eq41} and \eqref{eq42}. Hence,
\begin{align}
    N_{st}^2
    \sum_{h,j=1}^{\lfloor N/2\rfloor}
    \big(
    |T_{N_{st},3}(h,j)|
    +
    |T_{N_{st},4}(h,j)|
    \big)
    =O(N^4)
    =o(N^5).
    \label{eq53}
\end{align}

\medskip

\noindent{\bf Step 3.5.}  Finally, we study the asymptotic behavior of the remaining term $R^{sstt}_N(h,j)$ in \eqref{eq38}. As an illustration, consider its first summand. By using elementary transformations, the bound $D^+_K(x) \leq K$ and the one in Lemma \ref{Lem1}, and arguments similar to those employed above, we have
\begin{align}
    &\sum_{h,j=1}^{\lfloor N/2\rfloor}
    \big|
    D^+_{N_{st}}(\lambda_{h+j})D^+_{N_{st}}(\lambda_{-h+j-1})D^+_{N_{st}}(\lambda_{h-j-1})D^+_{N_{st}}(\lambda_{-h-j+2})
    \big|\notag\\
    &\leq
    N_{st}^2
    \sum_{h,j=1}^{\lfloor N/2\rfloor}
    \big|
    D^+_{N_{st}}(\lambda_{h+j})D^+_{N_{st}}(\lambda_{-h+j-1})
    \big|
    \notag\allowdisplaybreaks\\
    &\leq
    N_{st}^3\Bigg(
    \sum_{h=1}^{\lfloor N/2\rfloor}
    \big|
    D^+_{N_{st}}(\lambda_{2h-1})
    \big|
    +
    \sum_{\substack{h,j=1\\h\neq j-1}}^{\lfloor N/2\rfloor}
    \frac{1}{h+j}\Bigg)
    \notag\\
    &=O\big(N^4\log^2N\big).
\end{align}
The same arguments apply analogously to the remaining summands in $R^{sstt}_N(h,j)$, which yields
\begin{align}
    \sum_{h,j=1}^{\lfloor N/2\rfloor}
    |R^{sstt}_N(h,j)|
    =
    O\big(N^4\log^2N\big)
    =
    o(N^5).
    \label{eq54}
\end{align}

Altogether, the covariance term of interest is asymptotically equivalent to the sum of the terms in \eqref{eq43} and \eqref{eq44}. For \eqref{eq43}, we obtained the limit in \eqref{eq45}. By the decomposition in \eqref{eq38}, the contributions of $T_{N_{st},1}$ and $T_{N_{st},2}$ are given by \eqref{eq48} and \eqref{eq52}, respectively, while the contributions of $T_{N_{st},3}$, $T_{N_{st},4}$, and $R^{sstt}_N$ are asymptotically negligible by \eqref{eq53} and \eqref{eq54}. Combining \eqref{eq45}, \eqref{eq48}, \eqref{eq52}, \eqref{eq53}, and \eqref{eq54}, and by the definitions of $N_s$, $N_t$, and $N_{st}$, we therefore obtain
\begin{align}
N\Cov\!\big(
\widetilde \Psi^{\circ}_N(s),
\widetilde \Psi^{\circ}_N(t)
\big)
&=
\frac{16\pi^2}{N}
\sum_{h,j=1}^{\lfloor N/2\rfloor}
f_{[1]}(\lambda_h)f_{[1]}(\lambda_j)
\Cov\!\big(
I^s_{N,\varepsilon}(\lambda_h)I^s_{N,\varepsilon}(\lambda_{h-1}),
I^t_{N,\varepsilon}(\lambda_j)I^t_{N,\varepsilon}(\lambda_{j-1})
\big)
\notag\\
&\stackrel{N\to\infty}{\longrightarrow}
\tau_{f^2}^2
\left(
\frac{1}{\max(s,t)}
+
\frac{k(\min(s,t))}{4\max^2(s,t)}
\right),\label{eq55}
\end{align}
where $k\colon\mathbb D=[1/2,1]\to\mathbb R$ is defined by
\begin{align}
    k(s)
    &\coloneqq
    \frac{k_1(s)}{3}
    +
    \frac{k_2(s)}{16\pi^3s^2},
    \qquad s\in\mathbb D,
\end{align}
with $k_1$ and $k_2$ defined in \eqref{eq49} and \eqref{eq51}, respectively, which equals the function $k$ in \eqref{eq10}. Since the process in \eqref{eq9} has the covariance structure in \eqref{eq55}, the claim is verified \hfill\qed

\subsection{Proof of Proposition \ref{Prop2}}

From the definition of the sequential periodogram $I_N^s$ in \eqref{eq3}, where
$N_s=\lfloor sN\rfloor$ and $s\in\mathbb D=[1/2,1]$, it follows that
\[
    \big(I_N^s(\lambda)\big)^2
    =
    \frac{1}{4\pi^2s^2N^2}
    \sum_{m,n,o,p=1}^{N_s}
    X_mX_nX_oX_p
    \e^{-\imag(m-n+o-p)\lambda},
    \qquad \lambda\in[-\pi,\pi].
\]
and since $\int_{-\pi}^{\pi}\e^{-\imag j\lambda}\,\mathrm{d}\lambda=2\pi\delta_{j0}$,
\[
    \Psi'_N(s)
    =
    \int_{-\pi}^{\pi}\big(I_N^s(\lambda)\big)^2\,\mathrm{d}\lambda
    =
    \frac{1}{2\pi s^2N^2}
    \sum_{\substack{m,n,o,p=1\\ m-n=p-o}}^{N_s}
    X_mX_nX_oX_p.
\]
Further, since $(X_k)$ is a Gaussian white noise with unit variance, Isserlis's theorem yields
\begin{align*}
    \Exp\big(\Psi'_N(s)\big)
    &=
    \frac{1}{2\pi s^2N^2}
    \sum_{\substack{m,n,o,p=1\\ m-n=p-o}}^{N_s}
    \big(
        \delta_{mn}\delta_{op}
        +\delta_{mo}\delta_{np}
        +\delta_{mp}\delta_{no}
    \big)
    \\[1ex]
    &=
    \frac{1}{2\pi s^2N^2}\,N_s(2N_s+1).
\end{align*}

For the second moment, Isserlis's theorem decomposes the eighth-order Gaussian moment
into the $7!!=105$ possible pairings; cf.\ Step~2 in Section~\ref{Sec42}.
Under the constraints $m-n=p-o$ and $q-r=u-v$, several of these pairings have
fewer free indices and therefore give rise, after summation over the admissible
indices, to asymptotically negligible contributions. For $s\geq t$, we thus have
\begin{align*}
    N\Exp\!\left(\Psi'_N(s)\Psi'_N(t)\right)
    &=
    \frac{1}{4\pi^2s^2t^2N^3}
    \sum_{\substack{m,n,o,p=1\\ m-n=p-o}}^{N_s}~
    \sum_{\substack{q,r,u,v=1\\ q-r=u-v}}^{N_t}
    \Exp\!\big(
        X_mX_nX_oX_pX_qX_rX_uX_v
    \big)
    \\[1ex]
    &=
    \frac{1}{4\pi^2s^2t^2N^3}
    \bigg[
        N_sN_t(2N_s+1)(2N_t+1)
        +32N_sN_t^2
        +\frac{16}{3}N_t^3
        +O(N_t^2)
    \bigg].
\end{align*}
Here, the fraction in the term $\frac{16}{3}N_t^3$ arises from pairings whose associated
index sums involve sums of squares. Consequently,
for $s\geq t$,
\begin{align*}
    N\Cov\big(\Psi'_N(s),\Psi'_N(t)\big)
    &=
    \frac{1}{4\pi^2s^2t^2N^3}
    \bigg[
        32N_sN_t^2
        +\frac{16}{3}N_t^3
        +O(N_t^2)
    \bigg]
    \\[1ex]
    &\stackrel{N \to \infty}{\longrightarrow}
    \frac{4}{\pi^2}
    \left(
        \frac{2}{s}
        +\frac{t}{3s^2}
    \right),
\end{align*}
where we used $N_s/N\to s$ and $N_t/N\to t$. The case $t\geq s$ follows by
symmetry of the covariance, which completes the proof.

\appendix

\section{Auxiliary Results}\label{Sec5}

In the following, we collect properties of the upper Dirichlet kernel $D^+_K$, the Fejér kernel $F_K,$ and the Fejér mean $\sigma_K$ (see Section \ref{Sec4} for their definitions) that are used in the proofs of our results.

\begin{lemma}\label{Lem1} For any $K,N\in\N$ and $\xi\ge 1$, it holds that
\begin{align*}
    \sum_{h=1}^{N-1} \big|D^+_K(\lambda_h)\big|^\xi
    \leq N^\xi
    \begin{cases}
        ~1 + (\xi-1)^{-1}, & \xi>1,\\
        ~1 + \log N,      & \xi=1.
    \end{cases}
\end{align*} 
\end{lemma}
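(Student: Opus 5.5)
The idea is to bound each term with the closed form of the upper Dirichlet kernel at the Fourier frequencies, $\lambda_h = 2\pi h/N$. For $x\not\equiv 0 \pmod{2\pi}$ the geometric sum gives
\[
    D^+_K(x) = \e^{-\imag x}\,\frac{1-\e^{-\imag Kx}}{1-\e^{-\imag x}},
    \qquad\text{so}\qquad
    \big|D^+_K(x)\big| = \frac{|\sin(Kx/2)|}{|\sin(x/2)|} \leq \frac{1}{|\sin(x/2)|}.
\]
For $1\le h\le N-1$, symmetry lets us write $|\sin(\lambda_h/2)| = \sin(\pi \min(h,N-h)/N)$. Since $\min(h,N-h)/N\in(0,1/2]$, Jordan's inequality $\sin(\pi u)\ge 2u$ on $[0,1/2]$ yields
\[
    \big|D^+_K(\lambda_h)\big| \le \frac{N}{2\min(h,N-h)}.
\]

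Next we sum over $h$, splitting into $h\le N/2$ and $h>N/2$. Each value $k=\min(h,N-h)\in\{1,\dots,\lfloor N/2\rfloor\}$ arises for at most two values of $h$. Hence
\[
    \sum_{h=1}^{N-1}\big|D^+_K(\lambda_h)\big|^\xi \le 2\Big(\frac N2\Big)^{\xi}\sum_{k=1}^{\lfloor N/2\rfloor} k^{-\xi} = 2^{1-\xi}N^\xi \sum_{k=1}^{\lfloor N/2\rfloor}k^{-\xi}.
\]
For $\xi>1$ we use the integral comparison $\sum_{k\ge1}k^{-\xi}\le 1+\int_1^\infty x^{-\xi}\,\mathrm dx = 1+(\xi-1)^{-1}$. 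Together with $2^{1-\xi}\le 1$, this gives the first case. For $\xi=1$ the prefactor is $2^{0}=1$, and $\sum_{k\le N/2}k^{-1}\le 1+\log(N/2)\le 1+\log N$, which gives the second case.

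No step here is a real obstacle. The only point needing care is the constant bookkeeping, so that the factor $2$ from the two halves of the frequency range is absorbed. It is absorbed exactly: Jordan's inequality contributes the factor $(1/2)^{\xi}$, which combined with the factor $2$ gives $2^{1-\xi}\le1$. The crude bound $|D^+_K|\le K$ is never needed. As a result, the estimate is uniform in $K$, which is precisely what the applications in Steps~4 and~3.5 require with $K=N_{t}\asymp N$.
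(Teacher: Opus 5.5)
Your proposal is correct and takes essentially the same route as the paper. The geometric-sum closed form gives $|D^+_K(\lambda_h)|\le N/(2\min(h,N-h))$; your use of Jordan's inequality is exactly the paper's bound $|\sin(x/2)|^{-1}\le \pi/|x|$. The multiplicity-two counting then yields $2^{1-\xi}N^\xi\sum_{k=1}^{\lfloor N/2\rfloor}k^{-\xi}$, and the same integral comparison finishes both cases.
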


\begin{proof} From the definition of $D^+_K$ and the geometric-sum identity, it follows that
\[
    D_K^+(x)
    =
    \sum_{j=1}^K \e^{-\imag jx}
    =
    \e^{-\imag x}
    \frac{1-\e^{-\imag Kx}}{1-\e^{-\imag x}}, \qquad x \not\equiv 0 \bmod{2\pi}.
\]
Further, by Euler's formula and the Pythagorean identity, we have $|1-\e^{-\imag x}|^2 = 2(1-\cos(x)).$ The identities $|\e^{-\imag y}| = 1$ and $1-\cos x = 2\sin^2(x/2)$ thus yield the upper bound
\[
    \big|D_K^+(x)\big|
    \,\leq\,
    \frac{2}{|1-\e^{-\imag x}|}
    \,=\,
    \frac{1}{|\sin(x/2)|}
    \,\leq\,
    \frac{\pi}{|x|}, \qquad x \in [-\pi, \pi]\setminus\{0\}.
\]
Thus, due to the $2\pi$-periodicity of $|\sin(x/2)|$, and since $\lambda_h = 2\pi h/N,$ we obtain the upper bound
\[
    \big|D_K^+(\lambda_h)\big|
    \,\leq\,
    \frac{N}{2\min(h,N-h)},
    \qquad 1 \leq h < N,
\]
and therefore,
\begin{align*}
    \sum_{h=1}^{N-1}|D_K^+(\lambda_h)|^\xi
    \,\leq\,
    \frac{N^\xi}{2^\xi}
    \sum_{h=1}^{N-1}
    \frac{1}{\min\{h,N-h\}^\xi}    \,\leq\,
    2^{1-\xi}N^\xi
    \sum_{h=1}^{\lfloor N/2\rfloor}h^{-\xi}.
\end{align*}
If $\xi>1$, then
\[
    \sum_{h=1}^{\lfloor N/2\rfloor}h^{-\xi}
    \,\leq\,
    1+\int_1^\infty x^{-\xi}\,\mathrm{d}x
    \,=\,
    \frac{\xi}{\xi-1},
\]
whereas, if $\xi=1$,
\[
    \sum_{h=1}^{\lfloor N/2\rfloor}h^{-\xi}
    \,\leq\,
    1+\int_1^N \frac{\mathrm{d}x}{x}
    \,=\,
    1+\log N.
\]
Since $2^{1-\xi}\leq 1$ for $\xi\geq 1$, the claim is verified.
\end{proof}

\begin{lemma}\label{Lem2}
\begin{enumerate}
    \item[\textnormal{(a)}] For any $K,N \in \N$,
    \[
        F_K(\lambda_j)
        \leq \frac{N^2}{4Kj^2},
        \qquad 0<|j|\leq \lfloor N/2\rfloor.
    \]
    \item[\textnormal{(b)}] For any $K,N \in \N$ with $K\leq N$,
    \[
        \sum_{k=0}^{N-1}F_K(\lambda_k) = N.
    \]
\end{enumerate}
\end{lemma}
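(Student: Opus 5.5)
The plan is to use the closed form of the Fejér kernel on the Fourier grid and then handle (a) by an elementary sine bound and (b) by orthogonality of characters.

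\emph{Closed form.} From the definition of $F_K$ as a Cesàro mean of Dirichlet kernels, the standard identity gives
\[
F_K(x)=\frac{1}{K}\,\frac{\sin^2(Kx/2)}{\sin^2(x/2)}, \qquad x\not\equiv 0 \bmod 2\pi .
\]
This can be seen either by summing the geometric series $\sum_{j=0}^{K-1}D_j$ or by writing $K F_K(x)=|D_K^+(x)|^2$, which is exactly the relation $D^+_K(x)D^+_K(-x)=KF_K(x)$ used in Section~\ref{Sec42}.

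\emph{Part (a).} For $0<|j|\le\lfloor N/2\rfloor$ we have $|\lambda_j|/2=\pi|j|/N\in(0,\pi/2]$. On $[0,\pi/2]$ Jordan's inequality $\sin y\ge 2y/\pi$ holds, so
\[
\sin^2(\lambda_j/2)\ge \frac{4}{\pi^2}\cdot\frac{\pi^2 j^2}{N^2}=\frac{4j^2}{N^2}.
\]
Bounding the numerator $\sin^2(K\lambda_j/2)$ by $1$ then gives $F_K(\lambda_j)\le \frac{1}{K}\cdot\frac{N^2}{4j^2}$, which is the claim. This mirrors the bound $|D^+_K(x)|\le \pi/|x|$ from the proof of Lemma~\ref{Lem1}.

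\emph{Part (b).} Use the Fourier series form $F_K(x)=\sum_{|\ell|<K}(1-|\ell|/K)\e^{-\imag \ell x}$. Interchanging the finite sums gives
\[
\sum_{k=0}^{N-1}F_K(\lambda_k)=\sum_{|\ell|<K}\Big(1-\frac{|\ell|}{K}\Big)\sum_{k=0}^{N-1}\e^{-2\pi\imag \ell k/N}.
\]
The inner sum equals $N$ if $N\mid \ell$ and $0$ otherwise. Since $|\ell|<K\le N$, the only multiple of $N$ in range is $\ell=0$. That term carries weight $1$, so the total is $N$.

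The only point needing care is in (b): the hypothesis $K\le N$ is exactly what rules out aliasing from $\ell=\pm N$. Part (a) needs no restriction on $K$ relative to $N$.
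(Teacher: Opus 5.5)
Your proposal is correct and follows essentially the same route as the paper. For (a) the paper uses the same closed form $KF_K(x)=\sin^2(Kx/2)/\sin^2(x/2)$ together with $|\sin(x/2)|^{-1}\le \pi/|x|$ on $[-\pi,\pi]\setminus\{0\}$, which is your Jordan-inequality step; for (b) it uses the same character-orthogonality argument, with $K\le N$ ruling out every index except $\ell=0$.
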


\begin{proof}

\noindent \textnormal{(a)~}Using $K F_K(x)=(\sin(Kx/2)/\sin(x/2))^2$ for $x \not\equiv 0 \pmod{2\pi},$ and the inequality $|\sin(x/2)|^{-1}\leq \pi|x|^{-1}$ for $x\in[-\pi,\pi]\setminus\{0\}$, it holds indeed
\[
    F_K(\lambda_j)
    \leq \frac{1}{K|\sin(\lambda_j/2)|^2}
    \leq \frac{N^2}{4Kj^2},
    \qquad 0<|j|\leq\lfloor N/2\rfloor.
\]

\medskip

\noindent \textnormal{(b)~}Since $\sum_{k=0}^{N-1} \e^{-\imag j\lambda_k} = N$ if $j$ is a multiple of $N,$ and zero otherwise, and as $K\leq N$, only the term $j=0$ in the following sum contributes, yielding
\[
    \sum_{k=0}^{N-1} F_K(\lambda_k)
    =
    \sum_{|j|<K}\Bigl(1-\frac{|j|}{K}\Bigr)
    \sum_{k=0}^{N-1}\e^{-\imag j\lambda_k}
    =
    N.
\]
This verifies the claim.
\end{proof}

\begin{lemma}\label{Lem3}
Let $K=K_N\to\infty,$ and let $g:[-\pi,\pi] \to \R$ have a $2\pi$-periodic extension that is H\"older continuous of order $\eta>0.$ Then,
\[
\max_{0\leq h<N}
\big|\sigma_K(g,\lambda_h)-g(\lambda_h)\big|
=
\begin{cases}
    ~O(K^{-1}N^{1-\eta}), & 0<\eta<1,\\[1mm]
    ~O(K^{-1}\log N), & \eta=1.
\end{cases}
\]
\end{lemma}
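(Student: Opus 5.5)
The proof will follow the classical Fejér approximation argument, in a discrete version. By Lemma~\ref{Lem2}~(b), $\frac1N\sum_{j=0}^{N-1}F_K(\lambda_j-\lambda_h)=1$ whenever $K\le N$; this holds for all $h$ by $N$-periodicity of the grid. (If $K>N$, one replaces $K$ by $\min(K,N)$ in the argument; the cases of interest here have $K\asymp N$.) Hence
\[
\sigma_K(g,\lambda_h)-g(\lambda_h)=\frac1N\sum_{j=0}^{N-1}\big(g(\lambda_j)-g(\lambda_h)\big)F_K(\lambda_h-\lambda_j).
\]
By $2\pi$-periodicity of $g$ and $F_K$, we reindex with $k=j-h$ taken modulo $N$ and with representatives $|k|\le\lfloor N/2\rfloor$. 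The term $k=0$ vanishes. For $k\neq0$, Hölder continuity of the periodic extension gives $|g(\lambda_{h+k})-g(\lambda_h)|\lesssim |\lambda_k|^\eta\asymp (|k|/N)^\eta$, uniformly in $h$.

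Next I use non-negativity of $F_K$ together with the tail bound of Lemma~\ref{Lem2}~(a), $F_K(\lambda_k)\le N^2/(4Kk^2)$ for $0<|k|\le\lfloor N/2\rfloor$. This yields, uniformly in $h$,
\[
|\sigma_K(g,\lambda_h)-g(\lambda_h)|\lesssim \frac1N\sum_{0<|k|\le N/2}\Big(\frac{|k|}{N}\Big)^{\eta}\frac{N^2}{Kk^2}
= \frac{N^{1-\eta}}{K}\sum_{k=1}^{\lfloor N/2\rfloor}k^{\eta-2}.
\]
The remaining sum is elementary. For $0<\eta<1$ we have $\sum_{k\ge1}k^{\eta-2}<\infty$, which gives $O(K^{-1}N^{1-\eta})$. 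For $\eta=1$ the sum is harmonic and of order $\log N$, which gives $O(K^{-1}\log N)$. Both bounds are uniform in $h$, so the maximum over $0\le h<N$ is controlled.

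The only delicate point is the reindexing. Since $|\lambda_{h+k}-\lambda_h|$ must be measured modulo $2\pi$, we need the periodic extension to be Hölder across the endpoint $\pm\pi$, and this is exactly what the hypothesis provides. The identity $\frac1N\sum_j F_K=1$ also requires $K\le N$, which is why the Fejér kernel order is kept at most $N$ (as with $N_{st}\le N$ throughout the paper). No further obstacle is expected.
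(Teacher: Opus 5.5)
Your proof is correct and follows the paper's argument essentially step for step: normalization via Lemma~\ref{Lem2}~(b), reindexing $k\equiv j-h \bmod N$ with $|k|\le\lfloor N/2\rfloor$, the H\"older bound combined with the tail bound of Lemma~\ref{Lem2}~(a), and summing $k^{\eta-2}$. You are right that $K\le N$ is needed, and the paper also uses it implicitly through Lemma~\ref{Lem2}~(b); however, your parenthetical fix of replacing $K$ by $\min(K,N)$ does not work, because for $K>N$ the normalization itself fails (e.g.\ $g\equiv 1$ and $K=2N$ give $\sigma_K(g,\lambda_h)=2$), so the lemma should simply be read with $K\le N$, as in all its applications.
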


\begin{proof}
Let $0 \leq h < N$ be arbitrary. Since $\frac1N\sum_{j=0}^{N-1}F_K(\lambda_{h-j})=1$ by Lemma~\ref{Lem2}~(b), we have
\[
    \sigma_K(g,\lambda_h)-g(\lambda_h) = \frac1N\sum_{j=0}^{N-1}
    \,\big(g(\lambda_j)-g(\lambda_h)\big)F_K(\lambda_{h-j})\,.
\]
Hence, by the triangle inequality and the non-negativity of the Fejér kernel, 
\begin{align*}
    \big|\sigma_K(g,\lambda_h)-g(\lambda_h)\big|
    &\leq
    \frac1N\sum_{j=0}^{N-1}
    \big|g(\lambda_j)-g(\lambda_h)\big|
    F_K(\lambda_{h-j}).
\end{align*}
By extending $g : [-\pi,\pi] \to \R$ $2\pi$-periodically, for each $j\in\{0, \dots, N-1\}$, we can choose $k\equiv j-h \bmod N$ such that $|k|\leq \lfloor N/2\rfloor.$ Then, due to $2\pi$-periodicity of
$g$ and $F_K$, we have
\[
    g(\lambda_j)=g(\lambda_{h+k}),
    \qquad
    F_K(\lambda_{h-j})=F_K(-\lambda_k)=F_K(\lambda_k),
\]
where the last equality follows from symmetry of $F_K$. Consequently, because $g$ is H\"older continuous of order $\eta > 0,$ using the inequality in Lemma \ref{Lem2}~(a), and that $\lambda_k = 2\pi k/N,$ we obtain
\begin{align*}
    \big|\sigma_K(g,\lambda_h)-g(\lambda_h)\big|
    &\lesssim
    \frac1N
    \sum_{0<|k|\leq\lfloor N/2\rfloor}
    |\lambda_k|^\eta F_K(\lambda_k)
    \\
    &\lesssim 
    K^{-1}N^{1-\eta}
    \sum_{0<|k|\leq\lfloor N/2\rfloor}
    |k|^{\eta-2}\allowdisplaybreaks\\
    &\lesssim  
    K^{-1}N^{1-\eta}
    \sum^N_{k=1}\,k^{\eta-2}\,.
\end{align*}
Since the latter sum is $O(1)$ for $\eta < 1,$ and $O(\log(N))$ for $\eta=1,$ and as the latter upper bound is independent of $h$, the claim follows.
\end{proof}

\paragraph{Acknowledgements} This work was supported by TRR 391 (Project-ID 520388526) funded by the Deutsche Forschungsgemeinschaft (DFG, German Research Foundation). 

{\footnotesize
    \bibliographystyle{chicago}
    \bibliography{References_PivSpectral}
}

\end{document}